\title{Ihara zeta function and twisted Alexander invariants}
\author{Zipei Zhuang}
\newcommand{%
	
	\import{./figures/}{.pdf_tex}
}[1]{%
	
	\import{./figures/}{#1.pdf_tex}
}
\begin{document}
	
	\maketitle

	\begin{abstract}
		In \cite{Lin98randomwalk}, Lin and Wang defined a model of random walks on knot diagrams and interprete the Alexnader polynomials and the colored Jones polynomials as Ihara zeta functions, i.e. zeta functions defined by counting cycles on the knot diagram. Using this explanation, they gave a more conceptual proof for the Melvin-Morton conjecture. In this paper, we give an analogous zeta function expression for the twisted Alexander invariants. 
	\end{abstract}
	\newtheorem{definition}{Definition}%
	\newtheorem{theorem}{Theorem}%
	\newtheorem{lemma}{Lemma}%
	\newtheorem{corollary}{Corollary}%
	\section{Introduction}

	Let $J_{K,n}(q) \in \mathbb{Z}[q^{\pm 1}]$ be the n-th colored Jones polynomial for the knot $K$ . Define
	\begin{equation}
 \begin{aligned}
	f_{K,n}: & \mathbb{C} \longrightarrow \mathbb{C} \\
    &z \longmapsto J_{K,n}(e^{z/n})
\end{aligned}
	\end{equation}

The Melvin-Morton conjecture (see \cite{MR2860990} \cite{Melvinmorton}) claims that
\begin{equation} \label{mm}
	\lim_{n \rightarrow \infty} f_{K,n}(z) = \frac{1}{\triangle_K (e^z)}
\end{equation}
 
\par 
 In \cite{Lin98randomwalk}, Lin and Wang introduced a random walk model on a knot diagram, and gave formulations for the Alexander polynomials and colored Jones polynomials in this model. More specifically, the Alexander polynomial equals the inverse of the Ihara zeta function of the knot diagram. On the other hand, the Jones polynomial is calculated by counting simple families of cycles on the diagram, and the colored Jones polynomials are obtained by  counting simple families of cycles on d-cabling of the link. In the limit of  (\ref{mm}), it is shown(see \cite{Lin98randomwalk} Section 3 ) that only weights of simple families of cycles survive, which leads to a proof of the Melvin-Morton conjecture.
 
 \par 
 The volume conjecture (see \cite{MR2860990} \cite{murakami2010introduction}) says that
 \begin{equation}
\lim_{n \rightarrow \infty} \frac{log|f_{K,n}(2 \pi i)|}{n} = \frac{1}{2 \pi}vol(K)
 \end{equation}
where $vol(K)$ denotes the simplicial volume of the 3-manifold $S^3-K$.

The $L^2$ -Alexander invariant,  of an oriented knot $K$,
\begin{equation}
\Gamma^{(2)}(K) : \mathbb{R}_{>0} \longrightarrow \mathbb{R}_{>0}
\end{equation} 
 introduced in \cite{Lizhang}, can be viewed as a "twisted" invariant of $K$ . It follows from the definition that (see\cite{https://doi.org/10.1007/978-3-662-04687-6}) that $\Gamma^{(2)}(K)(1)$ = the $L^2$-torsion of the knot exterior , which by work of $L\ddot{u}ck-Schick$ is nothing but the volume of the knot complement:

 \begin{equation}
\Gamma^{(2)}(K)(1) =\exp (\frac{1}{6 \pi} vol(K))
 \end{equation}
 Hence the volume conjecture can be written as 
 \begin{equation} \label{vc}
\lim_{n \rightarrow \infty} \frac{log|f_{K,n(2 \pi i)}|}{n} = \frac{1}{2 \pi}vol(K) =3 log\Gamma^{(2)}(K)(1)
 \end{equation}
Comparing (\ref{mm})(\ref{vc}), we wonder if there is also a zeta function formula for the $L^2$ -Alexander torsion. On the other hand, the zeta-function-expressions for knot polynomials seem to be interesting enough to deserve a study on its own right. 

In Section 2, we use the matrix tree theorem for oriented diagrams to give a new perspective for the Alexander polynomial.
In Section 3, we reviewed the concept of the arc diagram of a tangle, the Ihara zeta function of a diagram and the determinant formula;  In Section 4, we explain the Ihara zeta function expression of the Alexander polynomial and deduce some basic properties from this viewpoint; In Section 5, possible generalizations to twisted Alexander invariants are discussed.

 \section{Matrix tree theorem for oriented graphs}
 We reviewed the Fox-calculus definition for the Alexander polynomial. Suppose the knot group $G(K)=\pi_1 (S^3 -K)$ has the Wirtinger presentation
 \begin{equation}
 	G(K)=<x_1 ,x_2 ...,x_n| r_1,...r_{n-1},r_{n}> 
 \end{equation}
 Indeed, one of the $r_i$ is redundant. We keep it here so as to better compare it with the formulus appearing in the matrix theorem.
 Let   $\alpha: G(K)\longrightarrow G(K)/[G(K),G(K)]\cong \mathbb{Z}   $ be the abelianization of the fundamental group. Define the Alexander matrix of $G(K)$ to be
 \begin{equation}
 	A=\bigg(\alpha_* \bigg( \frac{\partial r_i}{ \partial x_j}  \bigg)\bigg) \in M(n\times n ;\mathbb{Z}[t;t^{-1}]  . ) 	
 \end{equation}
 Then the Alexander polynomial is defined (up to $\pm t^s$) to be 
 
 \begin{equation}\label{al}
 	\frac{detA^{i,j}}{t-1}
 \end{equation}
 where $A^{i,j}$ is obtained from $A$ by deleting the i-th row and j-th column(here i,j can be arbitrary) .
 \par
 Let $G$ be an unoriented diagram, with vertices $v_1, v_2,...,v_n$. Each edge $e_i$ is labelled with a complex number $x_i$.
 Define an $n\times n$ matrix $A(G)$:
 \begin{equation}
 	A(G)_{i,j}= \begin{cases}
 		\text{sum of weights on edges having $v_i$ as a vertex} & \text{if }  i\neq j \\
 		-\{ \text{sum of weights on edges from $v_i$ to $v_j$}  \}  & \text{ if }i=j \\
 	\end{cases}
 \end{equation}
 
 \par 
 
 Let $T \subset G$ be a tree with edges $e_{i_1},...e_{i_k}$, define the weight of $T$ to be
 \begin{equation}
 	\vert T \vert = x_{i_1}x_{i_2}...x_{i_k}
 \end{equation}
 and define the tree polynomial to be 
 \begin{equation}
 	\bigtriangledown_G =\sum \vert T \vert
 \end{equation}
 where the sum is over all maximal trees.
 The (unoriented) matrix tree theorem says that
 \begin{equation}\label{ma}
 	\bigtriangledown_G = Det( \hat{A}(G))
 \end{equation}
 where $\hat{A}(G)$ is obtained from $A(G)$ by deleting the i-th row and i-th column.(i can be arbitrary)

 \par
 Noticing the similarity between (\ref{al}) and (\ref{ma}), we tried to give a new interpretation for the Alexander polynomial using the matrix tree theorem.
 Indeed, we need a oriented version, and the concept of a tree has to be replaced by an arborescence.

 \par 
 \subsection{Matrix tree theorem for oriented weighted graphs}
 Let $G=(V,E)$ be a directed graph, i.e. each edge is assigned with an orientation. For $i,j \in V$, we say $j$ is an outneighbor of $i$, if there is an edge $i \rightarrow j$. An edge $e$ is an outedge of $v \in V$ if it starts at $v$. Furthermore there is an assignment of weights $w: E \rightarrow \mathbb{C}$ to the edges.

 Define the \textbf{outdegree} of a vertice $v$ with respect to the weight $w$ to be the sum of weights on all the outedges of $v$, and denoted by $deg^+_w(v)$. In particular, if the weight is trivial, then $deg^+_w(v)$ is just the number of edges starting at $v$.
 
 Similarly, we can define the indegree $deg^-_w(v)$ of a vertice $v$. 
 
 The \textbf{Laplacian} of G with respect to the weight $w$ is an matrix $L=(l_{ij})_{n \times n}$, where $n=|V|$:
 \begin{equation}
 	l_{ij}=\begin{cases}
 		deg^+(i) & \text{ if } i=j \\
 		-\sum _e w(e)  \text{ where the sum is over edges $i \longrightarrow j$} & \text{ if }i \neq
 		j  \\    \end{cases}
 \end{equation}
 
 An \textbf{arborescence} of $G$ with roots $v_1,v_2,...,v_k \in V$ is a tree such that every vertex other than the roots has out-degree one, and the roots have out-degree 0, or equivalently, every vertex has one and only one (oriented) path to a root. Denote the set of all arborescences with root $v_1,... v_k$ by $A_G(v_1,...v_k)$.

 \begin{theorem} (Directed Miltigraph matrix tree theorem)
 	Let $G=(V,E)$ be an oriented multigraph, with edge weight $w:E \longrightarrow \mathbb{C}$, and $L$ is the Laplacian. Denote by $L_{i_1,...,i_k}$ the matrix L removing the $i_1,...,i_k-th$ rows and columes.
 	Then 
 	\begin{equation}\label{mtt}
 		Det(L_{i_1,...,i_k})= \sum_{A \in A_G (v_{i_1},...,v_{i_k})} \omega (A)
 	\end{equation}
 \end{theorem}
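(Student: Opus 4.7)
The plan is to reduce the multi-root statement to the classical single-root directed matrix tree theorem, and then prove the single-root case by a direct Leibniz expansion organised by ``functional subgraphs.''

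\textbf{Reduction to a single root.} Form a new directed multigraph $\tilde G$ by collapsing the vertices $v_{i_1},\ldots,v_{i_k}$ to a single vertex $v_*$, retaining every edge (so edges whose both endpoints were roots become loops at $v_*$). Let $\tilde L$ be the Laplacian of $\tilde G$. A direct comparison of the definitions shows that deleting the row and column indexed by $v_*$ from $\tilde L$ yields exactly $L_{i_1,\ldots,i_k}$: the outdegree of any non-root vertex is unchanged under the identification, and so is the weighted multiplicity of edges between any two non-roots. Moreover, an arborescence of $\tilde G$ rooted at $v_*$ lifts uniquely to a subgraph of $G$ whose non-root vertices each have outdegree one and a unique directed path to some $v_{i_l}$, i.e., to an element of $A_G(v_{i_1},\ldots,v_{i_k})$, with the same weight product. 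Thus it suffices to prove the single-root identity $\det(\tilde L_{v_*}) = \sum_{\tilde A} \omega(\tilde A)$.

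\textbf{Single-root case.} Expand $\det(\tilde L_{v_*})$ by the Leibniz formula as a sum over permutations $\sigma$ of the non-root vertices, and further expand each diagonal entry $\deg^+(v)=\sum_e w(e)$ as a sum over outedges of $v$. This rewrites the determinant as a signed sum indexed by pairs $(F,\sigma)$, where $F$ is a \emph{functional subgraph}---a choice of one outedge $e_v$ at every non-root $v$---and $\sigma$ is compatible with $F$ in the sense that whenever $\sigma(v)\neq v$, the edge $e_v$ points from $v$ to $\sigma(v)$. Regroup by fixing $F$ first. Every non-trivial cycle of $\sigma$ forces the corresponding $e_v$'s to trace out a directed cycle in $F$, and conversely each directed cycle of $F$ may be independently ``activated'' into $\sigma$'s cycle structure; hence compatible $\sigma$'s correspond bijectively to subsets of the directed cycles of $F$. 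Activating a single cycle of length $m$ contributes $(-1)^{m-1}$ from $\mathrm{sgn}(\sigma)$ times $(-1)^m$ from the $m$ extra off-diagonal minus signs in $\tilde L$, a net factor of $-1$. The inner sum over compatible $\sigma$'s therefore equals $\omega(F)\cdot(1-1)^{c(F)}$, where $c(F)$ is the number of directed cycles of $F$. This vanishes unless $c(F)=0$, i.e., $F$ is an arborescence rooted at $v_*$, in which case it equals $+\omega(F)$. Summing over such $F$ yields $\sum_{\tilde A}\omega(\tilde A)$.

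\textbf{Where the work sits.} The argument is essentially combinatorial bookkeeping, and no deep obstacle is expected. The point requiring the most care is the bijection between compatible $\sigma$'s and subsets of directed cycles of $F$: one must use that each non-root vertex has outdegree one in $F$ so the directed cycles of $F$ are pairwise vertex-disjoint, and then check that activating disjoint cycles is independent. The cycle-activation sign calculation is then immediate, and the reduction step is mechanical, giving the theorem.
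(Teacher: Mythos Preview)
Your argument is correct, but the route is genuinely different from the paper's. The paper does not collapse to a single root or use a Leibniz/functional-digraph expansion; instead it runs a direct induction on $n-k$. It introduces auxiliary vertex variables $x_1,\dots,x_m$, multiplies each edge weight by the variable at its head, and observes that both sides of the identity then become polynomials in which every monomial is independent of at least one $x_j$ with $j\notin\{i_1,\dots,i_k\}$ (on the arborescence side because every multi-root arborescence has a non-root leaf; on the determinant side because the full Laplacian has dependent rows, so the coefficient that would depend on all such $x_j$ must vanish). It then suffices to match, for each such $j$, the part independent of $x_j$; that part factors on both sides as $\det(L_{i_1,\dots,i_k,j})$ (respectively the arborescence sum on the subgraph with $v_j$ deleted) times the common factor $\sum_{l\neq j} L_{lj}\,x_j$, and induction with $v_j$ adjoined to the root set finishes.

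Your collapse-then-Leibniz argument is the classical combinatorial proof and is arguably cleaner as a stand-alone result; the cycle-activation sign analysis is exactly right (and your remark that out-degree one forces the directed cycles of $F$ to be vertex-disjoint is the key point). What the paper's proof buys, and why the author explicitly says the argument will be reused, is that its inductive step literally \emph{is} the operation of deleting a vertex and promoting it to an additional root---precisely the move of cutting one more arc of the knot diagram to pass from $D$ to the tangles $D_{i_1,\dots,i_k}$ in Section~2.2. So while your proof establishes the theorem, it does not set up that recursive structure.
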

 We give a proof here since some of the arguments are useful in the next section. See \cite{MR480115}.
 \begin{proof}
 	We prove the theorem by induction on $n-k$. The statement is true if $n=k$.
 	
 	Give a weight $x_i$ for each vertex $v_i$, i=1,2,...,m, $m=|V|$. Define a new weight 
 	\begin{equation}
 		\begin{aligned}\omega_{x_1,...x_m}:&E \longrightarrow \mathbb{C} \\ &e \longrightarrow \omega(e) \cdot\text{(weight on the end of e)} \\
 		\end{aligned}
 	\end{equation}
 	
 	Define the Laplacian and the weight of a arborescence under this new weight, and if there is no ambiguity, still denote them using the original notations. Then the original one becomes the special value for $x_1=...=x_m=1$. 
 	\\
 	
 	Now both sides of (\ref{mtt}) are degree $n-1$ polynomials. We want to show that every monomial appearing in the two sides is of degree 0 in some $x_j, j \neq i_1,...i_k$(j dependent on the monomial). Indeed, the degree of $x_j$ in $\omega(A)$ is just the trivial indegree $deg^-(v_j)$ of the vertice $v_j$.

 For $A \in A_G(v_{i_1},...v_{i_k})$, there is a vertex $v$ of $A$, $v \neq v_{i_1},...,v_{i_k}$, and $deg^-(v)=0$.
 
 Since $det(L_{i_1,...i_k})|_{x_{i_1}=...=x_{i_k}=0}$ divides $det(L)|_{x_{i_1}=...=x_{i_k}=0}$, $det(L)=0$, we have $det(L_{i_1,...i_k})|_{x_{i_1}=...=x_{i_k}=0}=0$. This means $det(L_{i_1,...i_k})$ has no terms dependent on all $x_i, i \neq i_1,...i_k$.
 
 Hence we only have to prove the equality "locally", for the parts independent of $x_i$, for each $i \neq i_1,...,i_k$.
 \\
 
 The part independent of $x_i$ on the left is
 \begin{equation}
 	Det(L_{i_1,...i_k,i}) \cdot(\sum_{j \neq i}L_{ji} x_i)
 \end{equation}
 
 Let $G'$ be the subgraph of $G$ generated by the vertices other than $v_i$, with Laplacian $L'$. Then $L_{i_1,...i_k,i}=L'_{i_1,...i_k}$.
 $\omega_A$ is independent of $x_j$if and only if the only edge connecting $x_j$with the remaining parts of $A$
 is an edge starting from $A$. Hence the right part independent of $x_j$ is
 \begin{equation}
 	\sum_{A \in A_G(i_1,...i_k)} \omega_A |_{x_j=0}= \sum_{A' \in A_{G'}(i_1,...,i_k)}\omega_{A'} \cdot  \sum_{j \neq i} L_{ji}x_i 
 \end{equation} 
 
 By induction 
 \begin{equation}
 	\sum_{A \in A_G(i_1,...i_k)} \omega_{A'}= Det(L'_{i_1,...i_k,})= Det(L_{i_1,...i_k,i}).
 \end{equation}
 
 The proof is completed.
 
\end{proof}
 
 \subsection{Application to the Alexander polynomial}
 
 Let $D$ be a diagram of a knot $K$, with arcs $A_1, A_2,...A_n$. Cut off some of them, say $A_{i_1},...A_{i_k}$, then $D$ becomes a k-string tangle $D_{i_1,...i_k}$. The arborescences in $D$rooted at $A_{i_1},...A_{i_k}$ are exactly  the trees in $D_{i_1,...i_k}$.
 And let $M_{D(i_1,...i_k)}$ be the matrix $M$ of $D$ negeleting the $i_1,...i_k-$ rows and columes. Using the matrix tree theorem for oriented graphs, we have:

  \begin{theorem}
 	\begin{equation}
 		M(D(i_1,...i_k))= \sum_{A \in T(D_{i_1,...i_k})} \omega_A
 	\end{equation}
 	where the sum is over all trees in $D_{i_1,...i_k}$.
 \end{theorem}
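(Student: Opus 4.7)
The plan is to recognize the matrix $M$ associated to the knot diagram $D$ as the Laplacian of a directed multigraph on the vertex set $\{v_1,\ldots,v_n\}$ of arcs, and then invoke the directed multigraph matrix tree theorem (Theorem 1) from the previous subsection. The statement splits into two pieces: identifying $M$ as a Laplacian, and identifying arborescences in $D$ with trees in the cut tangle $D_{i_1,\ldots,i_k}$.

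First, I would unravel the combinatorial data attached to $D$. To each arc $A_i$ assign a vertex $v_i$, and at each crossing assign directed edges among the three incident arcs according to the local Wirtinger relation, with the $t$-weights chosen so that the resulting Laplacian $L$ coincides entry-by-entry with the matrix $M$ coming from Fox calculus: the diagonal entry becomes $\deg^+_w(v_i)$, matching the diagonal of $M$, and the off-diagonal $(i,j)$-entry becomes the negated sum of weights over edges $v_i\to v_j$, matching $-\alpha_*(\partial r/\partial x_j)$ summed over relations $r$ at crossings where $A_i$ is the outgoing undercrossing arc. This is the oriented Laplacian interpretation of the Alexander matrix already implicit in \cite{Lin98randomwalk}.

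Second, I would establish the bijection
\begin{equation}
  \{\text{arborescences of }D\text{ rooted at } v_{i_1},\ldots,v_{i_k}\} \;\longleftrightarrow\; \{\text{spanning trees of } D_{i_1,\ldots,i_k}\}.
\end{equation}
Given an arborescence $A$ with the designated roots, every non-root vertex has a unique outgoing edge on the oriented path to some root; forgetting the roots after cutting the arcs $A_{i_1},\ldots,A_{i_k}$ leaves exactly the edge set of a spanning tree of the tangle $D_{i_1,\ldots,i_k}$. Conversely, a spanning tree of the tangle admits a unique orientation with each edge pointing toward the nearest boundary component (arc stub), and this recovers an arborescence in $D$ with roots $v_{i_1},\ldots,v_{i_k}$. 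The bijection is manifestly weight-preserving, since the same underlying set of edges contributes the same monomial $\omega_A$ under either interpretation.

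With these two identifications in hand, the result follows immediately from \eqref{mtt}, namely
\begin{equation}
  \det M_{D(i_1,\ldots,i_k)} \;=\; \det L_{i_1,\ldots,i_k} \;=\; \sum_{A \in A_D(v_{i_1},\ldots,v_{i_k})} \omega(A) \;=\; \sum_{T \in T(D_{i_1,\ldots,i_k})} \omega_T.
\end{equation}
The main obstacle is the first step: carefully verifying at a single crossing that the Fox derivative entries (after abelianization) match the Laplacian entries for the oriented multigraph derived from the diagram, including sign and $t$-power conventions. Once this local translation is checked, the rest is a formal application of Theorem 1 combined with the arborescence-versus-tangle-tree correspondence, which is essentially a re-reading of the definition of an arborescence after the cut.
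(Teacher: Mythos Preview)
Your proposal is correct and follows exactly the paper's approach: the paper's entire argument for this theorem is the observation that the arborescences in $D$ rooted at $A_{i_1},\ldots,A_{i_k}$ are precisely the trees in the cut tangle $D_{i_1,\ldots,i_k}$, followed by an appeal to the directed matrix tree theorem (Theorem~1). You have simply fleshed out the two steps---the Laplacian identification and the arborescence/tree bijection---that the paper states in a single sentence without proof.
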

 
 In particular, if $k=1$, then
 \begin{corollary}
The Alexander polynomial of $K$
\begin{equation}
\bigtriangleup_K(t) = \sum_{A \in T(D_i)} \omega_A ,
\end{equation}
here $i$ can be any arc.

 \end{corollary}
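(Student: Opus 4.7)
The plan is to deduce the corollary as the $k=1$ specialization of the preceding theorem, with the remaining content being to identify the resulting determinant with $\Delta_K(t)$. Taking $k=1$ in Theorem~2 immediately yields $\det M_{D(i)} = \sum_{A\in T(D_i)} \omega_A$, so the real task is to match $\det M_{D(i)}$ with the Fox-calculus Alexander polynomial~\eqref{al}.

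First, I would make $M(D)$ explicit as the Laplacian of a directed weighted graph whose vertices are the arcs of $D$: at each crossing, two directed edges issue from the outgoing under-arc, one to the incoming under-arc and one to the over-arc, with weights read off from the Fox derivatives of the corresponding Wirtinger relation. Because each arc of $D$ is born at exactly one crossing, every vertex of this graph acquires precisely two out-edges, so the construction produces a bona fide Laplacian in the sense of Section~2.1. Equivalently, the rows of the Alexander matrix $A$ sum to zero, a fact I would verify from the Fox identity $\sum_j(\partial r/\partial x_j)(x_j-1)=r-1$ applied to a Wirtinger relation $r$, whose abelianization is trivial.

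Next, I would match $M(D)$ with the Alexander matrix up to the $\pm t^s$ ambiguity of $\Delta_K$. Because the rows of $A$ sum to zero, Jacobi's cofactor identity forces $(-1)^{i+j}\det A^{i,j}$ to be independent of $j$, so the diagonal minor $\det A^{i,i}$ produced by Theorem~2 agrees, up to sign, with the expression in \eqref{al}. The $(t-1)$ denominator there can then be absorbed into the normalization of the edge weights at each crossing, equivalently by rescaling a single row by a unit in $\mathbb{Z}[t^{\pm 1}]$.

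The main obstacle is the bookkeeping of signs and of powers of $t$ that arise when translating the Fox derivatives at positive versus negative crossings into the edge weights of the directed graph, and checking that the $(t-1)$ factor in \eqref{al} can be absorbed uniformly without disrupting the arborescence-counting interpretation. Once that normalization is fixed, the identification of arborescences of $G_D$ rooted at $v_i$ with spanning trees of the tangle $D_i$ obtained by severing $A_i$ is immediate from the definition of an arborescence, and the corollary follows.
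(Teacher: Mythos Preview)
Your first step---specializing Theorem~2 to $k=1$---is exactly what the paper does, and in fact that is the \emph{entirety} of the paper's argument: the corollary is introduced with ``In particular, if $k=1$, then'' and nothing more. The identification of the minor with the Alexander polynomial is treated as already established by the Fox-calculus setup at the start of Section~2.

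The extra justification you supply (row sums vanish via the Fox identity, independence of the deleted column via cofactor relations) is correct and more careful than the paper. However, your handling of the $(t-1)$ denominator is wrong: you propose to absorb it ``by rescaling a single row by a unit in $\mathbb{Z}[t^{\pm 1}]$,'' but $t-1$ is not a unit in $\mathbb{Z}[t^{\pm 1}]$, so no such rescaling can remove it. This is not a cosmetic slip you can finesse away---indeed the paper itself is inconsistent on the point, since Section~3.1 later writes the Alexander polynomial as $\det M_{D(i)}/(t-1)$, which does not match the corollary as literally stated. The honest reading is that the right-hand side of the corollary computes $(t-1)\Delta_K(t)$ up to units, and your write-up should say so explicitly rather than claim the factor can be made to disappear.
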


Furthermore, taking $t=-1$, we get a formulus for the determinant of a knot:
 \begin{corollary}
 	\begin{equation}
 		det(K)=\bigtriangleup_K(-1)=\sum_{A \in T(D_i)} (-1)^{\alpha(A)}\cdot2^{\beta(A)}
 	\end{equation}
 where $\alpha(A),\beta(A)$ are respectively the number of go-straights and jump-ups of $A$.
 \end{corollary}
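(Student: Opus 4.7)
The plan is to substitute $t = -1$ into the arborescence formula
\begin{equation*}
\triangle_K(t) = \sum_{A \in T(D_i)} \omega_A
\end{equation*}
of the previous corollary, and to read off how the weights $\omega_A$ specialize in terms of the local combinatorial data at each crossing.

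To do this, I would first identify the weights on the individual out-edges of the oriented diagram $D$. At a crossing, one applies Fox calculus to the Wirtinger relator and then the abelianization $x_\ell \mapsto t$. The resulting row of the Alexander matrix has, after stripping off the diagonal outdegree entry, exactly two nonzero off-diagonal contributions, corresponding to the two options a walker on the incoming understrand has at the crossing: a \emph{go-straight} edge leading to the outgoing understrand, and a \emph{jump-up} edge leading to the overstrand. A direct computation of the Fox derivatives shows that these weights are, up to signs determined by the crossing type, $\pm t$ and $\pm(1-t)$ respectively.

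The next step is the specialization at $t = -1$: go-straight weights become $\pm 1$ and jump-up weights become $\pm 2$. Multiplying over the edges of an arborescence $A$ with $\alpha(A)$ go-straight edges and $\beta(A)$ jump-up edges then gives $\omega_A\vert_{t=-1} = (-1)^{\alpha(A)} \cdot 2^{\beta(A)}$, up to an overall sign that can be absorbed into the $\pm t^s$ indeterminacy of the Alexander polynomial. Summing over $A \in T(D_i)$ yields the claimed expression and, by the classical identity $\det(K) = \triangle_K(-1)$, the first equality as well.

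The main obstacle I anticipate is the sign bookkeeping across positive and negative crossings: the Fox calculus produces different signs on the $\pm t$ and $\pm(1-t)$ factors depending on the crossing type and the strand orientations, and one has to verify that these signs, when collected over all edges of an arborescence, combine into exactly $(-1)^{\alpha(A)}$ rather than into a mixed expression involving $\beta(A)$ as well. This should reduce to a short local verification at one crossing of each sign, together with a global consistency argument using that the relators sum (under abelianization) to a trivial contribution.
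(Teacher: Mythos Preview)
Your proposal is correct and is exactly the paper's approach: the paper derives this corollary in one line by ``taking $t=-1$'' in the arborescence formula $\triangle_K(t)=\sum_{A}\omega_A$, so your substitution argument matches it. Your anticipated sign difficulty in fact dissolves at $t=-1$, since the go-straight weights $t$ and $t^{-1}$ both specialize to $-1$ and the jump-up weights $1-t$ and $1-t^{-1}$ both specialize to $2$, independently of the crossing sign.
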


 \section{The Ihara zeta function and the determinant formula}

 \begin{figure}
 	
 	\centering
 	\includegraphics[scale=0.1]{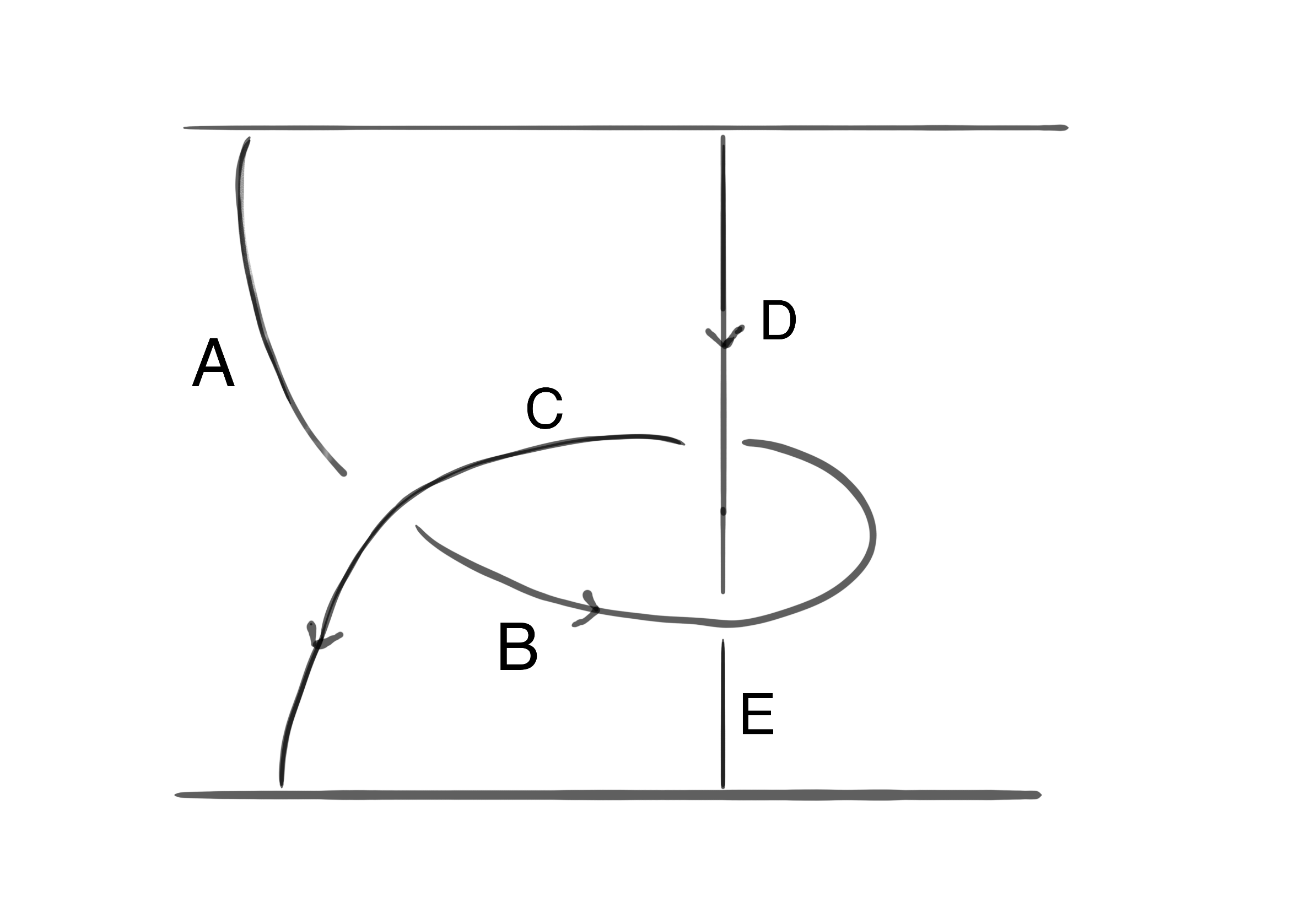}
 	\caption{an oriented tangle}
 	\label{fig1}
 \end{figure}

 \subsection{The arc diagram of a tangle} \label{ad}
 A tangle is a compact 1-manifold (with boundary) properly embedded in $\mathbb{R} \times \mathbb{R} \times [0,1] $ such that the boundary of the embedded 1-manifold is a set of distinct points in $ \{0\} \times \mathbb{R} \times \{0,1\} $. Two tangles are called isotopic if they are related by an isotpy of $\mathbb{R}^2 \times [0,1]$ fixing its boundary points.
 \\

 	Let $T$ be an oriented tangle, we assign an oriented weighted diagram $\textbf{Diag(T)}$ to $T$ as follows.
 	\par
 	
 	The vertices of $Diag(T)$ are in 1-1 correspondence with the arcs between any two adjoint undercrossings or boundary points. We denote the vertice corresponding to  the arc $C$ by $f(C)$. Let $C_1,C_2$ be two arcs on $T$. There is an edge from $f(C_1)$ to $f(C_2)$ with weight $t_1 \ (t_2)$ if $f(C_1)$ goes under a positive-oriented (negative-oriented) crossing point to $f(C_2)$, and there is an edge from $f(C_1)$ to $f(C_2)$ with weight $s_1 \ (s_2)$ if $f(C_1)$ jumps up at a positive-oriented (negative-oriented) crossing point to $f(C_2)$.

 For an oriented tangle $T$ with n arcs $A_1,...,A_n$, define an $n \times n$ matrix $M_T$ with
 \begin{equation}
 	(M_T)_{i,j}=\begin{cases}
 		
 		\text{the weight on the edge} & \text{if there is an oriented edge from $A_i$ to $A_j$}, \\
 		0 & \text{otherwise} \\
 		
 	\end{cases}
 \end{equation}
 And we call $det M_T$ the determinant of $T$.
 Let $D$ be a diagram for the knot $K$, with arcs $A_1,...,A_n$. Cut some arcs $A_{i_1},...A_{i_k}$ so we get a new tangle, denoted $D(i_1,...,i_k)$. As has been pointed out in Section 2, the Alexander polynomial is just 
 \begin{equation}
 	\frac{Det M_{D(i)}}{t-1} 
 \end{equation}
 for any i=1,2,...,n.
 
 \begin{figure}
 	
 	\centering
 	\includegraphics[scale=0.1]{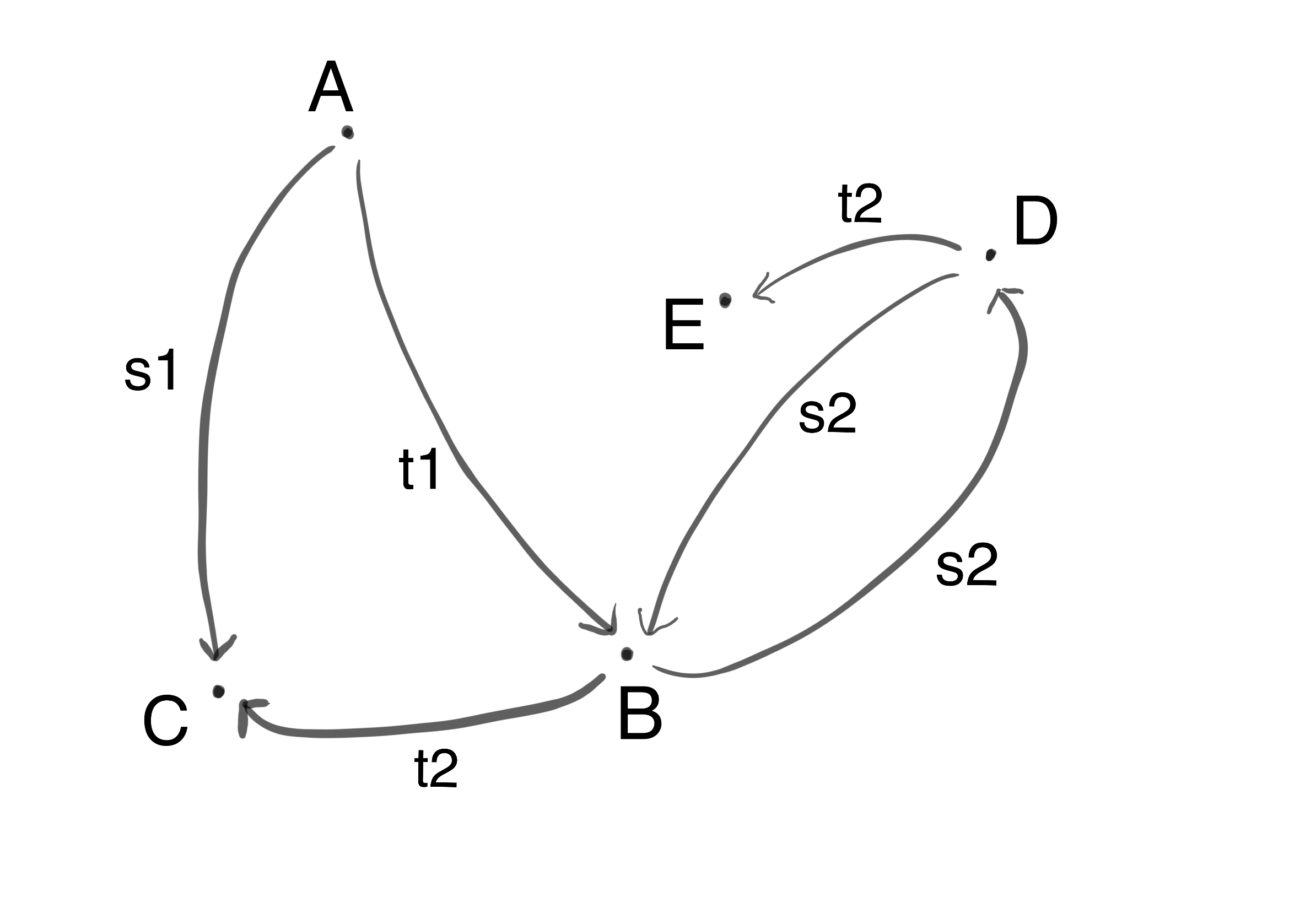}
 	\caption{The arc diagram of the oriented tangle in Figure \ref{fig1}}
 	\label{fig2}
 \end{figure}
	
	\subsection{The Ihara zeta function}
	First we review some basic notions on graphs.The terms are adopted from \cite{terras_2010}
	Let $X$ be an oriented graph with edge set $E$, vertex set $V$.
	A path $C=a_1...a_s$, where $a_j$ is an oriented edge of $X$, is said to have a \textbf{backtrack} if $a_{j+1}=a_j^{-1}$ for some $j=1,...,s-1$. We say $C$ has a \textbf{tail} $a_s=a_1^{-1}$; It is \textbf{closed} if the starting vertex is the same as the terminal vertex. The closed path $C$ is called a \textbf{prime} path if it has no backtrack or tail and $C \neq D^f$ for $f >1$. For the closed path $C=a_1...a_s$, the equivalence class $[C]$ means the following
	\begin{equation}
[C]={a_1...a_s, a_2...a_sa_1,...a_sa_1...a_{s-1}}
	\end{equation}
A \textbf{prime} in the graph $X$ is an equivalence class $[C]$ of prime paths.
Let $D$ be an arc diagram constructed in  Section \ref{ad}. By definition, for any $v_1,v_2$, there is at most one edge from $v_1$ to $v_2$. Define the adjacency matrix for $D$ to be an $n \times n$ matrix (n= the number of verticecs in $D$, and the vertices are denoted by $v_1,...v_n$).

\begin{equation}
	D_{i,j}= \begin{cases}
		\text{the weight on the edge} &  \text{if there is an edge from $v_i$ to $v_j$} \\
		0 & \text{otherwise}
	\end{cases}
\end{equation}

Given a closed path $C$ in $X$, which is written as a product of oriented edges $C=a_1...a_s$, the edge norm of $C$ is $N_E(C)=\omega_{a_1a_2}\omega_{a_2a_3}...\omega_{a_{s-1}a_s}\omega_{a_sa_1}$, where $\omega_{ab}$ denotes the weight of the edge from $a$ to $b$.
The \textbf{Ihara zeta function} of $X$ with weight $W$ is
\begin{equation}\label{zeta}
\zeta(W,X) = \prod_{[P]}(1-W(P))^{-1}=1+\sum W(C_1)W(C_2)...
\end{equation}
where the product is over all primes in X, and the sum is over all tuples of closed paths without backtracks or tails.Here we assume that all $\omega_{ab}$ are sufficiently small for convergence.

For convenience, we call the Ihara zeta function of $diag(T)$ to be the zeta function of the tangle diagram $T$, and denoted by $\zeta_T(t_1,t_2;s_1,s_2)$.
	
	\begin{lemma}\label{lemma}
(1)Let $T_1,T_2$ be two 1-string oriented tangle diagrams. Suppose they have the same orientation so that they can be composed. Then 
\begin{equation}
	\zeta_{T_1T_2}= \zeta_{T_1} \cdot \zeta_{T_2}
\end{equation}
here $T_1T_2$ is any composition of $T_1$ and $T_2$

(2) Let $T$ be an oriented tangle diagram, and $T^{{(n)}}$ be its n-cable. If we assign $s_1= 1-t_1,$, then
\begin{equation}
\zeta_{T^{(n)}}(t_1^{\frac{1}{n}},t_2^{\frac{1}{n}};1-t_1^{\frac{1}{n}},1-t_2^{\frac{1}{n}})=\zeta_T(t_1,t_2;1-t_1,1-t_2)
\end{equation}
	\end{lemma}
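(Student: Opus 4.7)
For part (1), the argument is structural: in $\mathrm{Diag}(T_1 T_2)$, the arc diagrams of $T_1$ and $T_2$ share exactly one vertex $v_0$, the arc joining the top of $T_1$ to the bottom of $T_2$. Because $v_0$ ends at the tangle boundary on the $T_1$-side it has no outgoing edges within $\mathrm{Diag}(T_1)$, so in $\mathrm{Diag}(T_1 T_2)$ every outgoing edge at $v_0$ goes into the $T_2$-part. A closed walk can therefore cross from $T_1$ to $T_2$ through $v_0$ but never crosses back; hence every closed walk lies entirely in $\mathrm{Diag}(T_1)$ (necessarily avoiding $v_0$) or entirely in $\mathrm{Diag}(T_2)$ (possibly using $v_0$ as a $T_2$-vertex). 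The primes split as a disjoint union and the Euler product (\ref{zeta}) factorizes as $\zeta_{T_1}\zeta_{T_2}$.

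For part (2) I use the determinant formula $\zeta_T = \det(I - M_T)^{-1}$, which holds here because $\mathrm{Diag}(T)$ is a directed graph with no reversed edges (walks follow the orientation of the knot), so backtracks are impossible and the Euler product reduces to $\exp\sum_k \mathrm{tr}(M_T^k)/k$. The goal is to prove $\det(I - M_{T^{(n)}}) = \det(I - M_T)$ under the substitution. The engine is the telescoping identity
\begin{equation}
(1 - t_i^{1/n})(1 + t_i^{1/n} + \cdots + t_i^{(n-1)/n}) = 1 - t_i:
\end{equation}
at each original crossing the cable contains an $n \times n$ grid of cable crossings, and an understrand cable copy entering the grid either passes straight through all $n$ cable undercrossings (total weight $t_i$) or jumps up at the $k$-th cable crossing onto the $k$-th overstrand copy (weight $t_i^{(k-1)/n}(1 - t_i^{1/n})$), with the $n$ jump weights summing to $s_i$.

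Next I partition the arcs of $T^{(n)}$ into external arcs (cable copies of arcs of $T$) and internal arcs (the $n-1$ subdivisions each understrand cable copy acquires inside a grid). The internal-to-internal block of $M_{T^{(n)}}$ is a disjoint union of linear chains of "straight within the grid" edges, hence nilpotent, giving $\det(I - M_{II}) = 1$; the Schur complement then yields $\det(I - M_{T^{(n)}}) = \det(I - M^{\mathrm{red}})$ on $V_{\mathrm{arcs}} \otimes \mathbb{C}^n$. The telescoping identity forces the tensor decomposition
\begin{equation}
M^{\mathrm{red}} = M_T^{\mathrm{str}} \otimes I_n + \sum_{c} M_{T,c}^{\mathrm{jmp}} \otimes F_n^{(c)},
\end{equation}
where $M_T^{\mathrm{str}}$ and $M_{T,c}^{\mathrm{jmp}}$ are the straight and per-crossing jump parts of $M_T$, and each $F_n^{(c)}$ is a rank-one $n \times n$ matrix whose rows all equal the vector $(1 - t_i^{1/n})(1, t_i^{1/n}, \dots, t_i^{(n-1)/n})/(1 - t_i)$ (with $t_i = t_1$ or $t_2$ according to the sign of $c$). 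Each $F_n^{(c)}$ satisfies $F_n^{(c)} \mathbf{1} = \mathbf{1}$ and $\mathrm{Im}(F_n^{(c)}) \subset \mathbb{C}\mathbf{1}$, so $V_{\mathrm{arcs}} \otimes \mathbb{C}\mathbf{1}$ is $M^{\mathrm{red}}$-invariant with induced action $M_T$, and choosing any complement $V'$ of $\mathbb{C}\mathbf{1}$ in $\mathbb{C}^n$ puts $M^{\mathrm{red}}$ in block-upper-triangular form with lower-right block $M_T^{\mathrm{str}} \otimes I_{V'}$. Hence
\begin{equation}
\det(I - M_{T^{(n)}}) = \det(I - M_T) \cdot \det(I - M_T^{\mathrm{str}})^{n-1},
\end{equation}
and since straight transitions alone on a tangle form acyclic paths from boundary inputs to boundary outputs, $M_T^{\mathrm{str}}$ is nilpotent and the trailing factor equals $1$.

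The main obstacle will be the bookkeeping in the Schur-complement step: correctly identifying the external cable arcs with the tensor factor $V_{\mathrm{arcs}} \otimes \mathbb{C}^n$ under the blackboard $n$-cabling convention, and verifying that the order in which each understrand cable copy meets the parallel overstrand copies produces precisely the rank-one matrices $F_n^{(c)}$ above. Once the tensor structure is in place the remaining argument is standard linear algebra.
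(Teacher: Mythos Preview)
Your treatment of part (1) is essentially the paper's: both arguments observe that the primes of the composite $T_1T_2$ are exactly the primes of $T_1$ together with the primes of $T_2$, and the Euler product then factorizes. The paper states this in one sentence; your version adds the useful observation that the gluing vertex has all its outgoing edges on one side, which explains why no prime can straddle both pieces.

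For part (2) your route is genuinely different. The paper argues directly on the cycle expansion of $\zeta$: to each cycle $C$ on $T$ it associates the packet of cycles on $T^{(n)}$ that project to $C$ when the $n$ parallel strands are collapsed, and the telescoping identity $(1-t^{1/n})(1+t^{1/n}+\cdots+t^{(n-1)/n})=1-t$ is applied locally at each crossing to show that the total weight of the packet equals $W(C)$; summing over $C$ gives the equality of zeta functions. You instead pass through the determinant formula $\zeta=\det(I-M)^{-1}$, eliminate the internal grid arcs by a Schur complement, recognize the reduced transition matrix as $M_T^{\mathrm{str}}\otimes I_n+\sum_c M_{T,c}^{\mathrm{jmp}}\otimes F_n^{(c)}$ with each $F_n^{(c)}$ rank one, and block-triangularize using the common eigenvector $\mathbf{1}$. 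The two proofs share the same engine---the telescoping identity---but the paper uses it as a local weight identity inside a combinatorial correspondence, whereas you encode it as the row-sum condition $F_n^{(c)}\mathbf{1}=\mathbf{1}$ in a linear-algebraic reduction. Your approach makes the role of the cable index explicit and isolates exactly where the extra $\det(I-M_T^{\mathrm{str}})^{n-1}$ factor appears and why it is $1$; the paper's approach is shorter and bypasses the bookkeeping you yourself flag in the Schur-complement step. One caveat worth stating: your nilpotency claim for $M_T^{\mathrm{str}}$ relies on $T$ having no closed components (so that following the orientation without jumping always terminates at a boundary arc); this is satisfied for the $1$-string tangles to which the lemma is actually applied in the paper, but you should make that hypothesis explicit.
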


\begin{figure}
	
	\centering
	\includegraphics[scale=0.1]{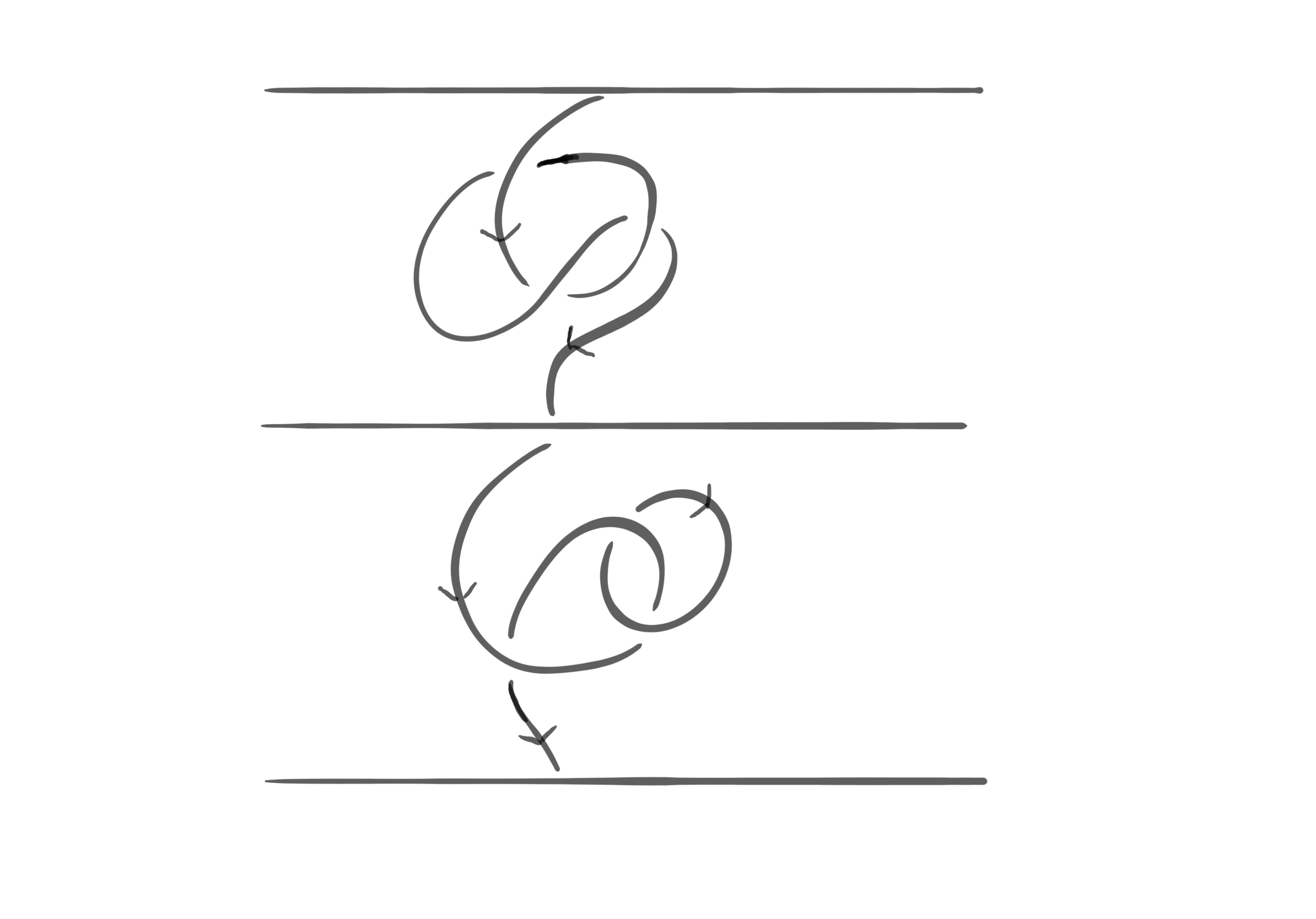}
	\caption{ The composite of two 1-string tangle}
	\label{fig4}
\end{figure}

\begin{proof}
(1)It is easily seen that the set of primes in a composition $T_1T_2$ consists of primes in $T_1$ and primes in $T_2$.

(2)Let $C$ be a cycle on $T$. There are $n^c$ cycles corresponding  to $C$ on $T^{(n)}$, where $c$= the crossing number of T. These $n^c$ cycles satisfy the property that when $c$ comes across under a crossing(jump up at a crossing), they comes  across under(jump up) at the corresonding crossings too.

When $C$ comes across a positive undercrossing, it contributes weight $t_1$ to the whole cycle; meanwhile a corresponding cycle $C'$ on $T{(n)}$ have to come across $n$ undercrossings, comtributing to $(t_1^\frac{1}{n})^n=t_1$ ,the same as above;
When $C$ jumps up at a positive crossing, it contributes weight $1-t_1$ to the whole cycle, and a corresponding cycle $C'$ on $T^{(n)}$ has $n$ choices: It can first come across $k$ under-crossings and then jump up, $k=0,1,...,n-1$. Hence the sum of weights here is 
\begin{equation}
	(1-t^{\frac{1}{n}})(1+t^{\frac{1}{n}})(1+t^{\frac{2}{n}})+...+t^{\frac{k}{n}} =1-t^n
\end{equation}

also equals to the above.
Hence we have proved 
\begin{equation}
\sum W(\widetilde{c}) =W(c)
\end{equation}
where the sum is over all cycles corresponding to $c$ in $T^{(n)}$.
By the second equality in(\ref{zeta}), the above argument shows that our statement is true.

\end{proof}

\begin{figure}
	
	\centering
	\includegraphics[scale=0.1]{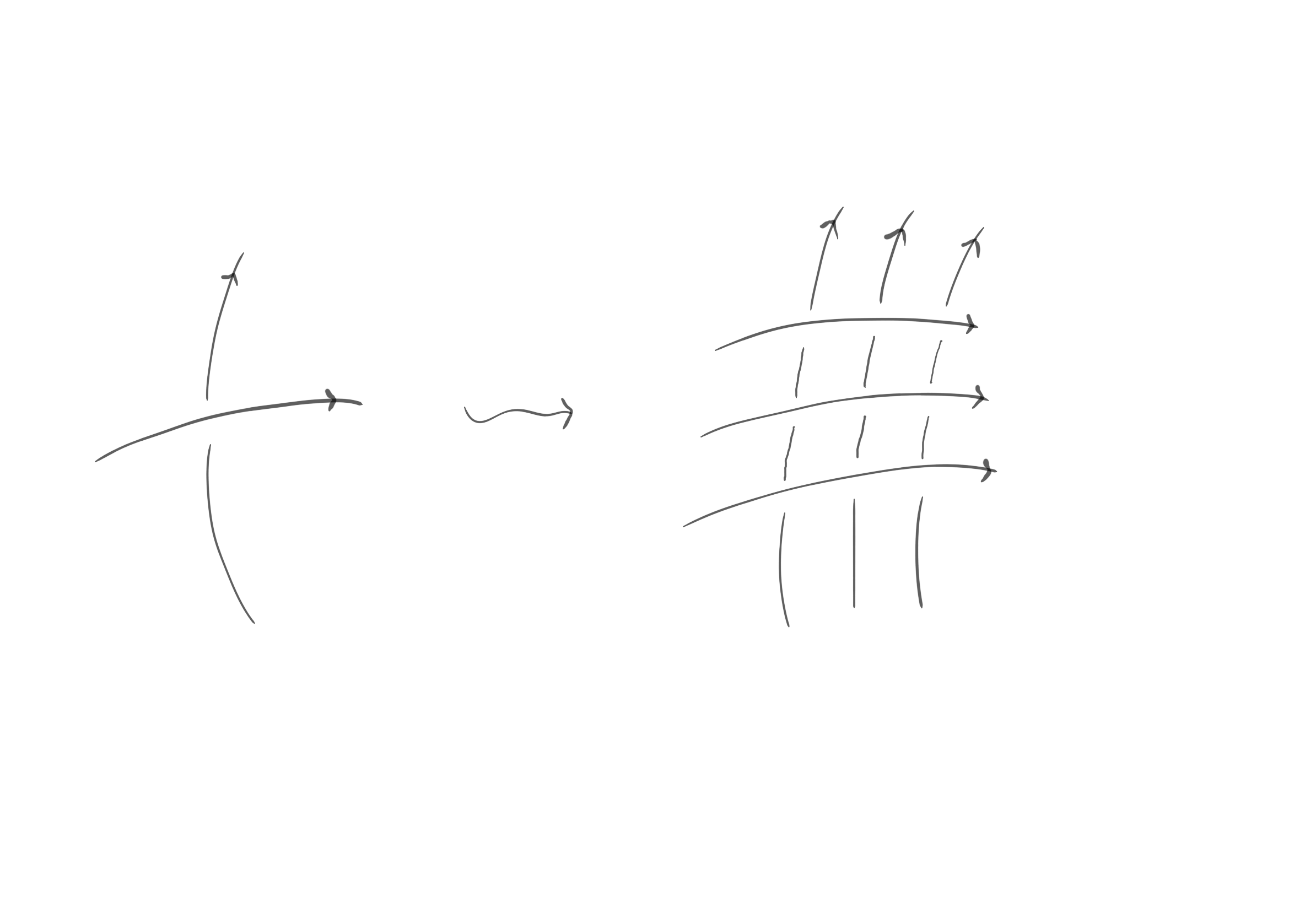}
	\caption{the local behaviour near a crossing point under a cabling}
	\label{fig4}
\end{figure}

\subsection{The determinant formula}
	
	\begin{theorem}
Let $X$ be a weighted oriented graph with weight matrix $W$. Then
\begin{equation}
\zeta(W,X) = det(I-W)^{-1}
\end{equation}
	\end{theorem}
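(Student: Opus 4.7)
The plan is to take logarithms on both sides and show that each equals $\sum_{m\geq 1}\mathrm{tr}(W^m)/m$, using two standard ingredients: the Taylor expansion of $\log(I-W)$ and the combinatorial interpretation of $\mathrm{tr}(W^m)$ as a sum over closed walks of length $m$ on $X$. Under the smallness assumption on the edge weights stated at the end of Section 3.2, all formal series in sight will converge, so the manipulations below are rigorous and not merely formal.

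First I would rewrite the right-hand side as
$$-\log \det(I-W) \;=\; -\mathrm{tr}\,\log(I-W) \;=\; \sum_{m\geq 1}\frac{\mathrm{tr}(W^m)}{m}.$$
Next, by expanding out
$$\mathrm{tr}(W^m)=\sum_{i_1,\ldots,i_m}W_{i_1i_2}W_{i_2i_3}\cdots W_{i_mi_1},$$
each nonzero summand corresponds to a closed directed walk of length $m$ on $X$, with weight equal to its edge norm $N_E$. Because in an arc diagram each ordered pair of vertices carries at most one edge (and in practice no pair carries edges in both directions at a single crossing), any such directed closed walk automatically has no backtrack and no tail, and is therefore a ``closed path'' in the sense used to define $\zeta(W,X)$. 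Then I would perform the standard prime decomposition: every closed path $C$ of length $m$ with marked starting vertex factors uniquely as $C=P^k$ with $P$ a prime path of length $\ell$ and $k\ell=m$, and inside each equivalence class $[P]$ there are exactly $\ell$ such marked representatives (one for each rotation). Grouping summands accordingly gives
$$\mathrm{tr}(W^m)=\sum_{\ell\mid m}\ell\sum_{[P]:\,|P|=\ell}N_E(P)^{m/\ell},$$
and substituting $m=k\ell$ collapses the double sum to
$$\sum_{m\geq 1}\frac{\mathrm{tr}(W^m)}{m}=\sum_{[P]}\sum_{k\geq 1}\frac{N_E(P)^k}{k}=-\sum_{[P]}\log\bigl(1-N_E(P)\bigr)=\log\zeta(W,X),$$
which finishes the argument.

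The only step requiring real care is the bookkeeping in the prime decomposition: one must check that the factor $\ell$ coming from the $\ell$ cyclic rotations of a prime of length $\ell$ cancels correctly against the $1/m$ in $\mathrm{tr}(W^m)/m$ through the relation $m=k\ell$, so that each prime class contributes a clean geometric series in $N_E(P)$. A secondary point that deserves an explicit remark in the writeup is the absence of backtracks and tails in closed directed walks on $X$; this is what justifies replacing the restricted sum in the definition of $\zeta(W,X)$ by the unrestricted trace formula, and it is where the ``oriented'' hypothesis on the arc diagram is actually used.
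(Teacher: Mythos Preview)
Your argument is correct and is the standard trace--logarithm proof of the determinant formula. Note, however, that the paper does not actually give its own proof of this theorem: immediately after the statement it simply records that ``this is a special case of the general Foata--Zeilberger formula'' and cites \cite{foata}. So there is no paper proof to compare against at this point.

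That said, your approach is exactly the one the paper itself adopts later, in Section~5, when it proves the analogous lemma for the $L^2$ setting (the lemma asserting $\det(I-f)^{-1}=\zeta(W,X)$ for an invertible self-adjoint operator $f$). There the author writes $\log\det_{\mathcal N(\Gamma)}(I-f)^{-1}=\sum_{m}\mathrm{tr}_{\mathcal N(\Gamma)}(f^m)/m$, expands the trace over closed paths, and matches this against $-\log\zeta(W,X)=\sum_C W(C)/l(C)$. Your write-up is in fact cleaner than that argument: you make the prime decomposition and the cancellation of the rotation factor $\ell$ against $1/m=1/(k\ell)$ explicit, whereas in the paper's Section~5 proof that bookkeeping is left somewhat implicit.

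One minor remark on your discussion of backtracks and tails: for an oriented graph with weight matrix $W$, a directed walk only traverses edges in their given orientation, so $a_j^{-1}$ is never available and the ``no backtrack, no tail'' conditions are vacuous. You do not need any special feature of arc diagrams for this; it holds for any weighted oriented graph in the sense of the theorem. Your proof therefore works at the stated level of generality, not just for arc diagrams of tangles.
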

This is a special case for of general Foata-Zeilberger formula, see also \cite{foata}.

We will focus on the following case: Recall that we define a weighted oriented diagram for a knot diagram $K$, with weights belonging to $\{t_1,t_2,s_1,s_2\}$. Now let $t_1=t,t_2=t^{-1},s_1=1-t^{-1},s_2=1-t^{-1}$, then the $n \times n$ edge matrix $W'$ of $Diag(K)$ becomes the matrix appearing in the Fox-calculus-definition of the Alexander polynomial.

Let W be an $(n-1) \times (n-1)$ matrix obtained from $W$ by deleting some row and some column. Then $det(I-W)$ is the Alexander polynomial of the knot. By the determinant formula, the Alexander polynomial has the form
\begin{equation}
\bigtriangleup_K(t) = \prod_{[P]}(1-N_E(P))^{-1}=\sum_{C}(1-N_E(C))
\end{equation}
where the product is over all primes of the diagram $Diag(K)$, and the sum is over all cycles without backtracks or tails.

\textbf{Remark.} Let \{$A_1,A_2,...A_n$\} be the set of arcs on the knot diagram $K$. Now we cut some arc, say $A_1$, so that $K$ becomes a 1-string tangle $K_1$,  and $A_1$ is broken into two arcs, $A_1^{'}$, $A_1^{''}$, 
 one initial and one terminal. Start walking along $K_1$. When we come across a positive crossing point, we have probability $t^\varepsilon$ to continue moving( along the orientation) , or jump up with probability $1-t^\varepsilon$, where $\varepsilon =\pm 1$ is determined by whether the crossing point is positive or not. The weight of a route on $K_1$ is by definition the multiplication of the possibilities at each crossing points. Then the above formula shows that the Alexander polynomial of $K$ equals the sum of weights of all the closed paths on $K_1$.

	\section{Some corollaries}
	
	We give some properties of the Alexander polynomial from this viewpoint, although they can be deduced from other methods, and are standard context in textbooks.
	
	\begin{lemma}\label{l1}
Let $T$ be a 1-string oriented tangle, with two ends denoted by $A^{'}, A^{''}$. Denote by $P(A^{'}, A^{''})$ the set of all paths from $A^{'}$ to $A^{''}$(Recall that when we say a path on the tangle, we mean a path on the corresonding arc diagram of the tangle). The weight $\omega(P)$ of a path $P$ is defined as in Remark $1$.
Then
\begin{equation}
\sum_{P(A^{'},A^{''})}\omega(P)=1
\end{equation}
	\end{lemma}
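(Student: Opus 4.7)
The plan is to identify the statement as the $A'$-entry of the unique solution to a natural linear system on arcs, whose solution turns out to be the constant vector $\mathbf{1}$.

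For each arc $A$ of $\mathrm{Diag}(T)$, I set $N_A := \sum_{P \in P(A, A'')} \omega(P)$; the target is $N_{A'} = 1$. Since $A''$ is a boundary endpoint of the 1-string tangle, no edge of $\mathrm{Diag}(T)$ leaves $A''$, so the only path from $A''$ to $A''$ is the empty one and $N_{A''} = 1$. For every other arc $A$, conditioning on the first edge of a path gives the recursion $N_A = \sum_B \omega(A \to B)\, N_B$, i.e. $(I - R)\, N = e_{A''}$, where $R$ is the weight matrix of $\mathrm{Diag}(T)$. The key observation is that by the construction of Section 3.1, each non-terminal arc has exactly two outgoing edges --- ``go straight'' and ``jump up'' at the next crossing --- carrying weights $t_\varepsilon$ and $s_\varepsilon = 1 - t_\varepsilon$ ($\varepsilon \in \{1,2\}$ determined by the crossing sign), which sum to $1$. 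Hence $R \mathbf{1} = \mathbf{1} - e_{A''}$, so $(I - R)\mathbf{1} = e_{A''}$ and the constant vector $\mathbf{1}$ is a solution.

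Uniqueness of the solution follows from invertibility of $I - R$ over $\mathbb{Q}(t)$, which I verify by specialising at $t = 1$: there every ``jump up'' weight $1 - t_\varepsilon$ vanishes and every ``go straight'' weight equals $1$, so $R|_{t = 1}$ is the deterministic ``follow the strand'' transition. This matrix is nilpotent, because iterating ``go straight'' from any arc eventually reaches the sink $A''$, so $I - R|_{t=1}$ is upper triangular (in the arc ordering along the strand) with $1$'s on the diagonal. Hence $\det(I - R)|_{t=1} = 1 \neq 0$, so $\det(I - R)$ is a nonzero Laurent polynomial in $t$ and $I - R$ is invertible over $\mathbb{Q}(t)$.

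The main obstacle is interpreting the sum $\sum_P \omega(P)$, which generally has infinitely many terms because $\mathrm{Diag}(T)$ can contain directed cycles. I would handle this by viewing $N_A$ as the $(A, A'')$-entry of $(I - R)^{-1} \in M_n(\mathbb{Q}(t))$, or equivalently as the $y \to 1$ specialisation of the formal power series $\sum_{k \geq 0} (yR)^k\, e_{A''} \in \mathbb{Q}[t^{\pm 1}][[y]]$; this specialisation is legitimate precisely because $\det(I - R) \not\equiv 0$. Under this rational-function interpretation the identity $(I - R)\mathbf{1} = e_{A''}$, combined with invertibility of $I - R$, forces $N \equiv \mathbf{1}$, whence $N_{A'} = 1$.
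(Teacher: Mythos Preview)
Your argument is correct. The paper does not actually prove this lemma: its entire proof is the single line ``See \cite{MR1609599} \S3, Corollary~1.'' Your proposal therefore supplies a self-contained argument where the paper only gives a pointer.

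The mechanism you use---observing that the two outgoing weights $t^{\varepsilon}$ and $1-t^{\varepsilon}$ at each non-terminal arc sum to $1$, so that the all-ones vector solves $(I-R)N=e_{A''}$, and then verifying invertibility of $I-R$ by specialising at $t=1$, where $R$ degenerates to the nilpotent ``walk along the strand'' shift---is the natural conservation-of-probability argument, made rigorous over $\mathbb{Q}(t)$ by interpreting the possibly divergent path sum as the appropriate entry of $(I-R)^{-1}$. This is almost certainly the content of the cited result in Lin--Wang as well (the random-walk model there is set up precisely so that outgoing probabilities sum to $1$), so your route is not genuinely different from the intended one, just written out explicitly. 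Your care in distinguishing the formal series from its rational specialisation is a point the paper glosses over entirely, and is worth keeping.
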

	\begin{proof}
		See \cite{MR1609599} \&3. Corollary 1.
	\end{proof}

	\begin{corollary}
		(1) Let $K_1, K_2$ be two knots, then
		\begin{equation}
			\bigtriangleup_{K_1+K_2}(t)=\bigtriangleup_{K_1}(t)\bigtriangleup_{K_2}(t)
		\end{equation}
	
	(2) Let $K_1, K_2$ be two knots, and K is a link as follows:

	then\begin{equation}
		\bigtriangleup_{K}(t) =(1-t^{-1})\bigtriangleup_{K_1+K_2}(t)
	\end{equation}
Of course this result can be obtained directly from the skein relation of the Alexander polynomial.

	(3)If $L$ is a split link, then $\bigtriangleup_L (t)=0$.
	\end{corollary}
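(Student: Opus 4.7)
My overall approach is to exploit the zeta-function / cycle-sum representation of $\bigtriangleup_K(t)$ developed in Sections 3 and 4, together with the composition Lemma \ref{lemma} and the path-sum identity of Lemma \ref{l1}. The three parts correspond to three different mechanisms by which the Alexander polynomial factors or vanishes: multiplicativity of $\zeta$, row-sum degeneracy of $I-W$, and a hybrid cycle-counting argument.

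For (1), I would realise $K_1+K_2$ so that the arc cut in the connected-sum diagram lies in the bridge region. With this choice, the 1-string tangle $D(i)$ obtained by cutting the bridge arc is precisely the composition $T_1T_2$ of the 1-string tangles $T_i$ coming from cutting an arc of each $K_i$ at the bridge. Lemma \ref{lemma}(1) then gives $\zeta_{T_1T_2}=\zeta_{T_1}\zeta_{T_2}$; combining this with the determinant formula $\zeta=\det(I-W)^{-1}$ and the identification $\bigtriangleup_K(t) = \det(I-W')$ (with $W'$ the matrix of $D(i)$ after deleting the cut arc's row and column) translates multiplicativity of $\zeta$ into multiplicativity of $\bigtriangleup$.

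For (3), I would argue at the level of the edge matrix. At every internal undercrossing the two outgoing edges from an arc carry weights $t^{\pm 1}$ and $1-t^{\pm 1}$, whose sum is $1$, so every row of the full edge matrix $W$ of $\mathrm{Diag}(L)$ sums to $1$. For a split link $L=L_1\sqcup L_2$ we have $\mathrm{Diag}(L) = \mathrm{Diag}(L_1)\sqcup\mathrm{Diag}(L_2)$, and after removing the row and column for any single cut arc (say, in $L_1$), the resulting matrix is block-diagonal with the $L_2$-block still having all row sums zero. The all-ones vector therefore lies in the kernel of the $L_2$-block, forcing its determinant, and hence $\bigtriangleup_L(t)$, to vanish.

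For (2), the plan is to compare $\mathrm{Diag}(K)$ with $\mathrm{Diag}(K_1+K_2)$: the link $K$ introduces one extra strand whose presence adds a cycle not present in the connected sum. After cutting arcs so that the two diagrams differ only on this extra piece, Lemma \ref{l1} can be applied along each strand of the new piece to collapse internal path-sums, leaving only the weight of the surviving new cycle; a block-determinant argument then separates $\bigtriangleup_K(t)$ into $\bigtriangleup_{K_1+K_2}(t)$ times a residual $(1-t^{-1})$. This is the part I expect to be hardest: in (1) and (3) the result is governed by a clean algebraic mechanism, whereas (2) depends on the precise geometry of the added link pattern, and the combinatorial bookkeeping of which cycles survive Lemma \ref{l1}'s path-sum collapse is where the proof will need care.
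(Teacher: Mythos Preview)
Your plan is essentially the paper's own proof. For (1) the paper does exactly what you propose: cut the connected sum along the bridge arc to get $T=T_1T_2$, invoke Lemma~\ref{lemma}(1), and translate $\zeta_T=\zeta_{T_1}\zeta_{T_2}$ into $\bigtriangleup_{K_1+K_2}=\bigtriangleup_{K_1}\bigtriangleup_{K_2}$. For (3) the paper also factors $\bigtriangleup_L$ as a product over cycles in the cut component $T_1$ times a product over cycles in the uncut component $L_2$, and then observes that the latter equals the determinant of the full (uncut) edge matrix of $L_2$, which is singular because its rows are linearly dependent---exactly your ``row sums of $I-W$ are zero on the uncut block'' argument, just phrased via the zeta function first.

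The only place your outline diverges in emphasis is (2). The paper does not set up a block-determinant comparison; instead it works entirely inside the zeta product. After cutting an arc, the primes of the resulting tangle are sorted into three disjoint classes: (a) primes lying in $T_1$, (b) primes lying in $T_2$, and (c) primes that use the one remaining ``extra'' arc linking the two pieces. Classes (a) and (b) contribute $\zeta_{T_1}$ and $\zeta_{T_2}$, i.e.\ $\bigtriangleup_{K_1}\bigtriangleup_{K_2}=\bigtriangleup_{K_1+K_2}$. For class (c), every such prime traverses one additional negative undercrossing and otherwise consists of a path whose endpoints agree; Lemma~\ref{l1} collapses the total weight of those paths to $1$, so the net contribution of class (c) is the single factor $(1-t^{-1})$. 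This prime-by-prime bookkeeping is cleaner than a block-determinant argument and is where Lemma~\ref{l1} actually enters, so you may want to recast your (2) in those terms.
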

	
	(4) Let $K$ be a knot and $K'$= the satellite knot of $K$ with pattern $P$.
	Then\begin{equation}
		\bigtriangleup_{K^{'}}(t)=\bigtriangleup_P(t) \cdot \bigtriangleup_{K}(t^n)
	\end{equation}
	
	\begin{proof}
		(1) Let $T_1, T_2$ be the 1-string tangle obtained respectively from $K_1, K_2$ by cutting some arc, and let $T$ be a composite tangle of $T_1, T_2$. Then $K$ is the closure of $T$. We have proved in Lemma \ref{lemma} that $\zeta_{T}=\zeta_{T_1}\zeta_{T_2}$, hence $\bigtriangleup_{K_1+K_2}=\bigtriangleup_{K_1}\bigtriangleup_{K_2}$.
		\\
		
		(2) Cut an arc and obtain a tangle $T$ as follows($T_1,T_2$ are still the 1-string tangle s from $K_1,K_2$, as above):
		
		The primes on $T$ consists of: (a)primes on $T_1$ ; (b)primes on $T_2$; (c) primes which contain the remaining arc of $K_2$ from $T_2$.
		By Lemma \ref{l1}, the total contribution of paths starting and ending at a same point is 1. Notice that each path corresonding to (c) comes across a negative undercrossing additionally, hence their contribution to the zeta function is $1-t^{-1}$. Since the factors in the zeta function of (a) and (b) are respectively the zeta function of $K_1$ and $K_2$, the proof is completed.
		\\
		
		(3) Suppose $L= L_1 \cup L_2$, and $L_1$ can be separated from $L_2$. Cut some arc on $L_1$ so that it becomes a tangle $T_1$   Then
		\begin{equation}
			\bigtriangleup_L(t)= \prod_{c \in T_1}(1-W(c_1)) \cdot \prod_{c_2 \in L_2}(1-W(c_2)) 
		\end{equation}
		However, $\prod_{c_2 \in L_2}(1-W(c_2))$= the determinant of the matrix of $L_1$, which has one row a linear combination of the others, hence is equal to 0.
		\\
		
		(4) We draw the diagram of $K^{'}$ as above, where the "nontrivial" part of $P$ and $K$  are separated. To determine the relationship between the Alexander polynomials of $K, P$ and $K^{'}$, we have to identify the primes of $K^{'}$ from that of $K, P$. 
		
		First we discuss the primes of the right hand part on the diagram. It is an n-cable of the 1-string tangle obtained from $K$. By Lemma \ref{lemma}, the factors in the zeta function $\zeta_{K^{'}}(t)$ corresonding to the primes on this part is just $\zeta_K(t^n)$. When we reglue the two parts together, except for the cycles carried by $P$ and $K^(n)$, new cycles appear. They are the cycles containing some arc linking the two "nontrivial" parts on the diagram. However, these cycles contribute nothing to our zeta function of $K'$, as illustrated in Lemma $\ref{l1}$. Hence the zeta function of $K^{'}$ is just the product of $\bigtriangleup_P(t)$ and $\bigtriangleup_K(t^n)$.
		
	\end{proof}

	\begin{figure}
		
		\centering
		\includegraphics[scale=0.1]{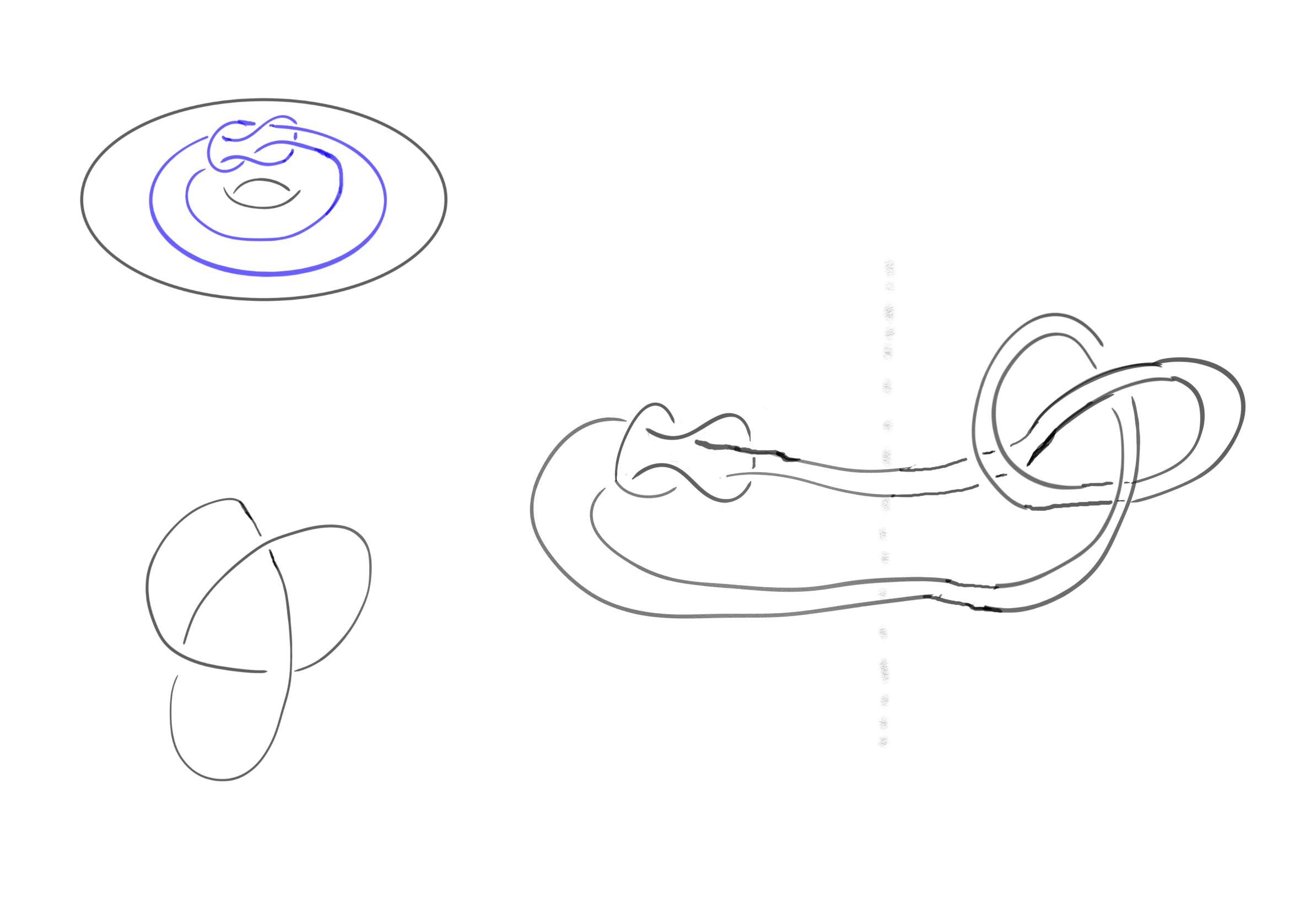}
		\caption{A satellite knot of the trefoil knot}
		\label{fig5}
	\end{figure}

	\section{Twisted Alexander invariants}
	In this section we discuss possible generalization of zeta function formula to twisted Alexander invariants. First we consider the twisted Alexander polynomials, see \cite{Kitano}.
	
	We recall the Fox-calculus definition of twisted Alexander polynomials.
	
	Let $\Gamma = \pi_1(S^3 -K)$ be the knot group of a knot $K$. It has the Weigtinger presentation
	\begin{equation}
\Gamma =<x_1,...x_n | r_1,..., r_{n-1}>
	\end{equation}
	where $r_i$ is of the form $x_j^{\epsilon}x_kx_j^{- \epsilon} x_i$.
	
	Let $F_k =<x_1,...,x_k>$ be the free group genereted by $k$ elements, and $\phi : F_k \longrightarrow \Gamma $ the canonical surjective homomorphism. It induces a homomorphism between the group rings:
	\begin{equation}
q : \mathbb{Z}[F_k] \longrightarrow \mathbb{Z}[\Gamma]
	\end{equation}
	Let $\rho : G(K) \longrightarrow GL(m;\mathbb{F})$ be a representation. Write 
	\begin{equation}
\rho_* : \mathbb{Z}G(K) \longrightarrow \mathbb{Z}GL(m;\mathbb{F}) \subset GL(m;\mathbb{F})
	\end{equation}
for the ring homomorphism induced by $\rho$.

Let $\alpha : G(K) \longrightarrow$ $<t>$ $ \cong  \mathbb{Z}$ be the abelization.
Denote by $\Phi $ the composite of $\mathbb{ZF}_n \longrightarrow \mathbb{Z}G(K)$ and $\rho_* \otimes \alpha_* : \mathbb{Z}G(K) \longrightarrow GL(m;\mathbb{F}) \otimes\mathbb{Z}  [t,t^{-1}] \subset GL(m; \mathbb{F}[t,t^{-1}]):$
\begin{equation} \label{Phi}
\Phi : \mathbb{Z}[F_k] \longrightarrow M(2; \mathbb{F}[t,t^{-1}])
\end{equation}

 Define an $(n-1) \times n$ matrix 
	\begin{equation}
		A_{\rho}= \left( \begin{array}{cccc}
			\Phi(\frac{\partial r_1}{\partial x_1}) &  \Phi(\frac{\partial r_1}{\partial x_2}) & \ldots 
			\Phi(\frac{\partial r_{1}}{\partial x_n})
			\\
			\Phi(\frac{\partial r_2}{\partial x_1}) & \Phi(\frac{\partial r_2}{\partial x_2}) & \ldots
			\Phi(\frac{\partial r_{2}}{\partial x_n})
			\\ 
			\vdots & \vdots & \ddots \\
			\Phi(\frac{\partial r_{n-1}}{\partial x_1}) &
			\Phi(\frac{\partial r_{n-1}}{\partial x_2}) & \ldots
			\Phi(\frac{\partial r_{n-1}}{\partial x_n})  \\
		\end{array}   \right)
\end{equation}
called the twisted Alexander matrix of $G(K)= <x_1,...,_n | r_1, ...r_{n-1}>$ associated to the representation $\rho$. Denote by $A_{\rho, k}$ the $(n-1)\times(n-1)$  matrix obtained from $A_{\rho}$ by removing the $k$-th cloumn.

The twisted Alexander polynomial of $K$ for $\rho$ is defined to be 
\begin{equation}
	\bigtriangleup_{K,\rho}(t)= \frac{\det A_{\rho, k}}{\det \Phi(x_k -1)}
\end{equation}
here $k$ is chosen so that $\det \Phi(x_k -1) \neq 0$. Note that when $\rho$ is the trivial representation
 \begin{equation}
G(K) \longrightarrow GL(1;\mathbb{R})
 \end{equation}
the corresponding twisted Alexander polynomial becomes the usual Alexander polynomial.
\\

Using the random walk model on knot diagram, we can generalize the zeta function formula for twisted ALexander polynomials. However, this time we have to be careful about the notation of the arcs. 

Cut out the knot at some arc. Denote the arc on one strand by $x_1$, walk along the orientation, and denote the arcs in order by $x_2, x_3...$ along the way. We denote a crossing point by $a_k$ if the arc $x_k$ comes under  this crossing point, and $r_i$ is the relationship of the $x_j{'}s$ around the point $a_i$ .
Then $r_i$ is of the form $x_ix_jx_{i+1}^{-1}x_j^{-1}$.
In particular, 
\begin{equation}
	\frac{\partial r_i}{\partial x_i}=1,  \quad
	\frac{\partial r_i}{\partial x_{i+1}}= -x_ix_jx_{i+1}^{-1},\quad
	\frac{\partial r_i}{\partial x_{i+1}} =x_i -x_ix_jx_{i+1}^{-1}x_j^{-1}
\end{equation}

 Define a weight on the arc diagram of $K$ as follows. If the edge $e$ starts at $x_i$ and ends at $x_j$, then we give it weight wwith respect to the representation $\rho$
 \begin{equation}
W_{\rho}(e)=\begin{cases} \Phi(x_ix_jx_{i+1}^{-1}) &\text{if} \quad j=i+1 \\
\Phi(x_ix_jx_{i+1}^{-1}x_j^{-1} -x_i) & \text{if}  \quad j \neq i+1 
\end{cases}
 \end{equation}
where $\Phi$ is defined as (\ref{Phi}).
Now the twisted Alexander matrix equals to $I-B$, where $B_{i,j}$= the evaluation of $W_{\rho}$ on the edge from $x_i$ to $x_j$. Hence by the general determinant formula, we have 
\begin{equation}
	det A_{\rho,n} = det (I-B) = \prod_{[P]} (w- W_{\rho}(P))
\end{equation}
where the product is over all primes on the arc diagram.
\\

Finally we discuss the $L^2$- torsion. Instead of the determinant of a matrix, the $L^2$- torsion is defined as the Fuglede- Kardison determinant of an operator, analogous to the construction above. First we review some basic notions necessary for the definition of $L^2$- Alexander invariant.

Let $\Gamma$ be a group. Define 
\begin{equation}
	l^2(\Gamma) = \text{the Hilbert space }\{ \ \sum_{\gamma \in \Gamma} a_{\gamma} \gamma | a_{\gamma} \in \mathbb{C} \quad with \sum |a_{\gamma}|^2 < + \infty \ \}
\end{equation}
The inner product on the Hilbert space is :
\begin{equation}
< \sum_{\gamma \in \Gamma} a_{\gamma} \gamma , \sum_{\gamma \in \Gamma} b_{\gamma} \gamma > = \sum_{\gamma \in \Gamma} a_{\gamma} \overline{b}_{\gamma}
\end{equation}

$\Gamma$ acts on $l^2(\Gamma)$ by left multiplication. Define
\begin{equation}
	l^2(\Gamma)^{[n]} =\underbrace{l^2(\Gamma) \oplus ...\oplus l^2(\Gamma)}_n
\end{equation}
 We call it a free $\mathcal{N}(\Gamma)$- Hilbert module of rank $n$.Let $e_i$ be the unit element in the $i$-th copy of $l^2(\Gamma)$ in $l^2(\Gamma)^{[n]}$.
 Let $V$ be a Hilbert $\mathcal{N}(\Gamma)$-submodule in $l^2(\Gamma)^{[n]}$ (i.e. $V$ is a Hilbert space embedded in $l^2(\Gamma)^{[n]}$, with the induced $\Gamma$- action on it), which has a Hilbert basis $\{u_j\}_{j \in J}$, and $f: V \longrightarrow V$ is a positive endomorphism of this this Hilbert $\mathcal{N}$-module.
 \begin{definition}
Define the von Neumann trace to be
\begin{equation}
tr_{\mathcal{N}(G)}(f) = \sum_{j \in J} < f(u_j, u_j) >
\end{equation}
 \end{definition}
The von Neumann dimension of $V$ is defined as
\begin{equation}
	dim_{\mathcal{N}}(G) (V) =tr_{\mathcal{N}(G)} ( id: V \longrightarrow V)
\end{equation}

\begin{definition}
Let $f: l^2(G)^m \longrightarrow l^2(G)^n$ be a homomorphism of Hilbert $\mathcal{N}(G)$- modules. The spectral density function of $f$ is 
\begin{equation}
F(f): \begin{aligned} 
	&[0, +\infty] &  &\longrightarrow&  &[0, +\infty] &\\ &\lambda& &\mapsto&  &sup\{  dim_{\mathcal{N}(G)}(L) |L \subset l^2(G)^m \text{a Hilbert }  \mathcal{N}(G)  \text{-submodule of }&  \\ &&&& &  l^2(G)^m \text{ such that } ||Ax|| \le \lambda \cdot ||x|| \text{ for all }  x \in L \} &
\end{aligned} 
\end{equation}

\end{definition}

\begin{definition}
Let $A$ be an $n \times n$ matrix over $\mathbb{R}[\Gamma]$. It defines a map of Hilbert $\mathcal{N}(\Gamma)$- modules $ l^2(\Gamma)^{[n]} \longrightarrow l^2(\Gamma)^{[n]} $ , and let $F_A$ be the spectral density function.
Define   

\begin{equation}
	det_{\mathcal{N}(G)}(f) = \begin{cases} exp \int_{0^+}^{\infty} ln(\lambda) dF & \text{ if }      \int_{0^+}^{\infty} ln(\lambda) dF > -\infty \\
	0  &\text{if }  \int_{0^+}^{\infty} ln(\lambda) dF =-\infty
	\end{cases}
\end{equation}

We say $f$ is of determinant class if $\int_{0^+}^{\infty} ln(\lambda) dF >-\infty$

\end{definition}

\begin{lemma}\label{lemma3}
(1) If f is invertible, then 
\begin{equation}
det(f) =exp (\frac{1}{2} \cdot tr (ln(f^*f)))
\end{equation}

(2) If f: U $\longrightarrow$ U is an injective positive operator, then
\begin{equation}
\lim_{\epsilon \rightarrow 0^+} det (f+ \epsilon \cdot id_U) =det(f)
\end{equation}

(3)\begin{equation}
  det(f)= det(f^*) = \sqrt{det(f^*f)} =\sqrt{det(ff^*)}
\end{equation}
\end{lemma}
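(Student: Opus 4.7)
The plan is to reduce all three assertions to the spectral theorem applied to $f^{*}f$ (and $ff^{*}$), exploiting the identity $F_{f}(\lambda)=F_{f^{*}f}(\lambda^{2})$ which is immediate from the definition once one rewrites $\|Ax\|\le\lambda\|x\|$ as $\langle(A^{*}A-\lambda^{2})x,x\rangle\le 0$. Under this identification the Fuglede-Kadison integral $\int_{0^{+}}^{\infty}\ln\lambda\,dF_{f}(\lambda)$ transforms, via the substitution $\mu=\lambda^{2}$, into $\tfrac{1}{2}\int_{0^{+}}^{\infty}\ln\mu\,dF_{f^{*}f}(\mu)$.

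For part (1), invertibility of $f$ forces the spectrum of $f^{*}f$ to be bounded away from $0$, so $\ln(f^{*}f)$ is a bounded self-adjoint operator and the von Neumann trace is given by the Borel functional calculus as $\operatorname{tr}_{\mathcal{N}(G)}(\ln(f^{*}f))=\int\ln\mu\,dF_{f^{*}f}(\mu)$; the substitution above immediately yields (1). Part (3) is the same change of variable without the invertibility hypothesis: $\ln\det(f^{*}f)=2\ln\det(f)$, and analogously $\ln\det(ff^{*})=2\ln\det(f^{*})$. The remaining identity $\det(f)=\det(f^{*})$, equivalent to $\det(f^{*}f)=\det(ff^{*})$, follows from the standard fact that $f^{*}f$ and $ff^{*}$ have the same spectral density on $(0,\infty)$; this is read off the polar decomposition $f=u|f|$, and any discrepancy at $\lambda=0$ is irrelevant since the integral starts at $0^{+}$.

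Part (2) is the main technical step. Since $f$ is positive, write $f=\int_{0}^{\infty}\lambda\,dE_{\lambda}$; injectivity of $f$ gives $E_{\{0\}}=0$. Then $f+\epsilon\cdot\mathrm{id}_{U}=\int(\lambda+\epsilon)\,dE_{\lambda}$ is positive and invertible with spectrum in $[\epsilon,\|f\|+\epsilon]$, and a direct computation gives
\[
\ln\det(f+\epsilon\cdot\mathrm{id}_{U})=\int_{0^{+}}^{\infty}\ln(\lambda+\epsilon)\,dF_{f}(\lambda).
\]
As $\epsilon\downarrow 0$ the integrand $\ln(\lambda+\epsilon)$ decreases monotonically to $\ln\lambda$ pointwise on $(0,\infty)$, while $\ln(\lambda+1)$ is bounded on the compact spectrum of $f$. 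Monotone convergence for decreasing sequences then delivers the stated limit, with the convention that both sides equal $-\infty$ simultaneously when $f$ is not of determinant class.

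The main obstacle is precisely part (2): one must justify passing the limit through an integral whose limiting integrand $\ln\lambda$ is singular at $\lambda=0$. Injectivity of $f$ (ensuring the spectral measure has no atom at $0$) together with the monotonicity of $\ln(\lambda+\epsilon)$ in $\epsilon$ are exactly the ingredients that let monotone convergence close this gap uniformly, whether or not $f$ itself is of determinant class.
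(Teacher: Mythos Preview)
Your argument is correct and is essentially the standard proof one finds in L\"uck's book, which is exactly what the paper does here: it gives no proof of its own and simply writes ``For a proof, see \cite{https://doi.org/10.1007/978-3-662-04687-6}.'' So there is nothing in the paper to compare your proposal against beyond that citation; you have supplied strictly more than the paper does.

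One small remark on presentation. Your justification of $F_{f}(\lambda)=F_{f^{*}f}(\lambda^{2})$ via ``rewrite $\|Ax\|\le\lambda\|x\|$ as $\langle(A^{*}A-\lambda^{2})x,x\rangle\le0$'' is slightly glib: the quadratic-form condition $\langle(A^{*}A-\lambda^{2})x,x\rangle\le0$ is not literally the defining condition for $F_{A^{*}A}(\lambda^{2})$ in the paper (which uses $\|A^{*}Ax\|\le\lambda^{2}\|x\|$). What makes the identity true is that, for a positive operator $B$, both conditions cut out the same spectral subspace $\operatorname{ran}E_{B}([0,\mu])$, and hence $F_{B}(\mu)=\operatorname{tr}_{\mathcal{N}(G)}E_{B}^{\mu}$; one then reads off $F_{f}(\lambda)=\operatorname{tr}_{\mathcal{N}(G)}E_{f^{*}f}^{\lambda^{2}}=F_{f^{*}f}(\lambda^{2})$. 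This is exactly how L\"uck argues, and once it is in place your substitution $\mu=\lambda^{2}$ and the monotone-convergence step for part~(2) go through as written.
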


For a proof, see \cite{https://doi.org/10.1007/978-3-662-04687-6}.

Now let $\Gamma$ be a knot group,with representation $<x_1, x_2...,x_n|r_1,...,r_{n-1}>$ and $\alpha: \Gamma \longrightarrow U(1)$ be the abelization. Let $F_k={x_1,...,x_n}$, and $\mathbb{Z}[F_k]$ the free group. Let $\phi:\mathbb{Z}[F_k] \longrightarrow \Gamma $ the natural map.  The right multiplication of $\Gamma$ on $l^2(\Gamma)$ induces a map
\begin{equation}
	\rho_\Gamma : \Gamma \longrightarrow GL(l^2(G))
\end{equation}
They induce maps on the group rings:
\begin{equation}
	\begin{split}
(\alpha)_* : \mathbb{Z}\Gamma \longrightarrow \mathbb{Z}<t> \\
(\rho_\Gamma)_* : \mathbb{Z}\Gamma \longrightarrow  \mathbb{Z}GL(l^2(G))
\end{split}
\end{equation}
Tensor the two maps , we obtain:
\begin{equation}
(\alpha)_* \otimes (\rho_\Gamma)_* : \mathbb{Z}\Gamma \longrightarrow  \mathbb{Z}<t> \otimes \mathbb{Z}GL(l^2(G)) \subset \mathcal{N}(\Gamma)
\end{equation}

Let $\Psi$ be the composite
\begin{equation}
\Psi = (\alpha_* \otimes {\rho_\Gamma}_*) \circ \widetilde{\phi} :\mathbb{Z}[F_k] \longrightarrow \mathcal{N}(\Gamma)
\end{equation}

Now we can define the $L^2$- Alexander torsion.

Define an operator $A_{\rho_{\Gamma} \otimes \alpha} : l^2( \Gamma )^{[k-1]} \longrightarrow l^2( \Gamma )^{[k]}  $ such that
\begin{equation}
	A_{\rho_{\Gamma} \otimes \alpha, (i,j)} = \Phi (\frac{\partial r_i}{ \partial x_j})
\end{equation}
It is called the $L^2$- Alexander matrix. Furthermore, let
\begin{equation}
	A_{\rho_{\Gamma} \otimes \alpha}^j : l^2(\Gamma)^{[k-1]} 
	\longrightarrow l^2(\Gamma)^{[k-1]}
\end{equation}
 be the morphism obtained from $	A_{\rho_{\Gamma} \otimes \alpha}$ by removing the $j$- th column from its matrix form.
 \begin{definition}The $L^2$- Alexander torsion of a knot is defined to be
 	\begin{equation}
\bigtriangleup_{K}^{(2)}(t) := Det_{\mathcal{N}(\Gamma)} (	A_{\rho_{\Gamma} \otimes \alpha}^1)
 	\end{equation}
 \end{definition}

We cannot get a zeta function formula like the twisted Alexander case since, unlike the determinant, there is no direct relationship between the $L^2$- torsion of a matrix and its entries.

If $f=(f_{ij}): l^2(\Gamma)^{[n]} \longrightarrow l^2(\Gamma)^{[n]}$ is an invertible operator, we can get a similar formulation. We have to first modify slightly the definition of the Ihara zeta function.

Let $X= (V, E)$ be an oriented graph such that for any vertices $v_1, v_2 \in V$, there is at most 1 edge from $v_1$ to $v_2$. Therefore we can identify an edge by its starting and termianal points. Let $\Gamma$ be a group.
For each edge $e_{ij}$ we assign an injective operator of determinant class $f_{ij}: l^2(\Gamma) \longrightarrow l^2(\Gamma) $ as the weight $\omega(e_{ij})$ of $e_{ij}$.

Given a closed path $C= e_1e_2...e_s$ on $X$, define the weight of $C$ to be $W(C):= W(e_1)W(e_2)...W(e_s)$.
In contrast to the usual case, the composite of operators is not commutative, hence we cannot identify a closed path with its conjugacy class. In particular, the concept of a prime is invalid now. For any path $P$, we denote by $l(P)$ the length of $P$.
\begin{definition}
	Let $X$ be an oriented graph as above, with a weight $W$ of operators of determinant class on each edge. The Ihara zeta function of $X$ with respect to $W$ is
	\begin{equation}
		\zeta(W,X) = \prod_P(1-\frac{1}{l(P)} det_{\mathcal{N}(\Gamma)} W(P))^{-1}
	\end{equation}
where the product is over all prime closed paths(with a marked starting point) of $X$.
 \end{definition}

Let $D$ be a knot diagram for an oriented knot $K$ with arcs $x_1,...,x_n$. As in the twisted case, they are ordered in such a way that $x_{i+1}$ follows $x_i$ along the orientation. And a crossing point is denoted $a_k$ if it is the terminal point of $x_k$. Let $r_i$ be the relationship of the $x_j$'s around $a_i$.
Then $\frac{\partial r_i}{\partial x_i} =1$, hence $\Psi( \frac{\partial r_i}{\partial x_i}) =id $.

\begin{lemma}
	Suppose $f=(f_{ij})_{n \times n} : l^2(\Gamma)^{[n]} \longrightarrow l^2(\Gamma)^{[n]}$ is an invertible self-adjoint operator, and each $f_ij : l^2(\Gamma) \longrightarrow l^2(\Gamma) $ is invertible, then
	\begin{equation}
		det(I-f)^{-1} = \zeta (W,X)
	\end{equation} 
\end{lemma}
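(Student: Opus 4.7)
The plan is to adapt the classical Ihara--Bass proof of $\zeta(X)^{-1}=\det(I-W)$ to the Fuglede--Kadison setting, taking logarithms of both sides and matching power series via the trace--log identity of Lemma~\ref{lemma3}(1).

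First, assuming for convergence that $\|f\|<1$ (with the general case recovered by Lemma~\ref{lemma3}(2) and continuity in the weights), self-adjointness of $f$ together with Lemma~\ref{lemma3}(1) yields
\[
\log\det_{\mathcal{N}(\Gamma)}(I-f)^{-1}\;=\;-\mathrm{tr}_{\mathcal{N}(\Gamma)^{[n]}}\log(I-f)\;=\;\sum_{m\ge 1}\frac{\mathrm{tr}(f^{m})}{m}.
\]
Unfolding the block trace gives $\mathrm{tr}(f^{m})=\sum_{(i_1,\dots,i_m)}\mathrm{tr}_{\mathcal{N}(\Gamma)}\bigl(f_{i_1i_2}f_{i_2i_3}\cdots f_{i_mi_1}\bigr)$, and the surviving index tuples are precisely the closed walks $C$ of length $m$ on $X$, each carrying the operator weight $W(C)=f_{i_1i_2}\cdots f_{i_mi_1}\in\mathcal{N}(\Gamma)$.

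Next I would exploit the cyclic invariance of $\mathrm{tr}_{\mathcal{N}(\Gamma)}$: every closed walk $C$ of length $m$ factors uniquely as $C=P^{k}$ for a primitive closed walk $P$ with $l(P)\,k=m$, and the $l(P)$ starting-point rotations within a single cyclic class $[P]$ all yield the same trace. Regrouping,
\[
\sum_{m\ge 1}\frac{\mathrm{tr}(f^{m})}{m}\;=\;\sum_{[P]}\sum_{k\ge 1}\frac{\mathrm{tr}_{\mathcal{N}(\Gamma)}\bigl(W(P)^{k}\bigr)}{k}\;=\;-\sum_{[P]}\log\det_{\mathcal{N}(\Gamma)}\bigl(I-W(P)\bigr),
\]
the last equality being Lemma~\ref{lemma3}(1) applied to $W(P)$, which is invertible and of determinant class because each $f_{ij}$ is. Exponentiating produces the clean Euler product $\det(I-f)^{-1}=\prod_{[P]}\det_{\mathcal{N}(\Gamma)}(I-W(P))^{-1}$ indexed by cyclic classes of primes.

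The main obstacle is reconciling this Euler product with the precise form of $\zeta(W,X)$ from the definition, which uses the scalar factor $1-\tfrac{1}{l(P)}\det_{\mathcal{N}(\Gamma)}W(P)$ indexed over \emph{marked-starting-point} primes $P$. Passing from the operator-valued quantity $\det_{\mathcal{N}(\Gamma)}(I-W(P))$ to the prescribed scalar factor requires spectral control of $W(P)$; here the self-adjointness of $f$ and invertibility of each $f_{ij}$ should supply the functional calculus needed, while redistributing the combinatorial weight $l(P)$ between the sum over marked-starting-point primes and the product over cyclic classes accounts for the $1/l(P)$ normalization. Carrying out this final identification rigorously is where the bulk of the work lies.
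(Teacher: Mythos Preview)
Your outline matches the paper's almost step for step: both take logarithms, apply Lemma~\ref{lemma3}(1) to obtain $\log\det_{\mathcal N(\Gamma)}(I-f)^{-1}=\sum_{m\ge1}\mathrm{tr}_{\mathcal N(\Gamma)}(f^m)/m$, and expand $\mathrm{tr}(f^m)$ as a sum over closed walks of length $m$ on $X$. The paper does not pass through your intermediate Euler product $\prod_{[P]}\det_{\mathcal N(\Gamma)}(I-W(P))^{-1}$; it instead expands $\log\zeta(W,X)$ directly from the definition, asserts the result equals $\sum_C W(C)/l(C)$, and matches that against the trace side.

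The obstacle you flag at the end --- reconciling the operator quantity $\det_{\mathcal N(\Gamma)}(I-W(P))$ with the scalar factor $1-\tfrac{1}{l(P)}\det_{\mathcal N(\Gamma)}W(P)$ --- is genuine, and the paper does not actually resolve it. Expanding the logarithm of the defined $\zeta$ gives $\sum_P\sum_{j\ge1}\tfrac{1}{j}\bigl(\det_{\mathcal N(\Gamma)} W(P)/l(P)\bigr)^{j}$, not the claimed $\sum_C W(C)/l(C)$; and in the final matching the paper silently replaces the $\mathrm{tr}_{\mathcal N(\Gamma)}W(C)$ that genuinely arises from the block-trace expansion by $\det_{\mathcal N(\Gamma)}W(C)$. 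So the paper's argument is no more complete than yours at precisely this point. The Euler product over cyclic prime classes that you derive is what the trace computation honestly proves; the residual mismatch with the stated $\zeta(W,X)$ reflects a defect in that definition rather than in your reasoning.
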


\begin{proof}
By Lemma \ref{lemma3}, we have
\begin{equation}
det_{\mathcal{N}(\Gamma)}(Id - f)= \exp tr_{\mathcal{N}(\Gamma)} \ln(I - f)
\end{equation}
Hence
\begin{equation}
	\begin{aligned}
		\log det_{\mathcal{N}(\Gamma)} (I- f)^{-1}& = - tr_{\mathcal{N}(\Gamma)} ln (I-f) 
		\\
		&=-tr_{\mathcal{N}(\Gamma)}( \sum_{m=0}^\infty \frac{f^m}{m})
		\\
	&	=-\sum_{m=0}^\infty \frac{tr_{\mathcal{N}(\Gamma)}f^m}{m}
	\end{aligned}
\end{equation}

On the other hand,
\begin{equation}
-\log \zeta(W,X)= \sum_P \sum_{j \ge 1} \frac{1}{j\cdot l(P)} det_{\mathcal{N}(\Gamma)} W(P)
\end{equation}
where the first sum is over all prime cycles.
It follows that 
\begin{equation}
	-\log \zeta(W,X) =\sum_C \frac{W(C)}{l(C)}
\end{equation}
where the sum is over all (not necessarily prime) closed paths (without bacjtracking or tails).

Let $e_i$ be the unit element in the $i$-th component of $l^2(\Gamma)^{[n]}$. Then $e_1, ..., e_n$ forms a basis for $l^2(\Gamma)^{[n]}$. By definition, 
\begin{equation}
	tr_{\mathcal{N}(\Gamma)} f^m = \sum_{i=1}^n <f(e_i)^m, e_i>
	=\sum_{i=1}^n \sum_{j_1,...,j_{k-1}} f_{j_{k-2}j_{k-1}}...f_{j_2j_3} \circ 
	f_{j_1j_2}\circ f_{ij_1}(e_i)
\end{equation}
where the second sum is over all $j_1, j_2,..., j_{k-1} \in \{1,...,n\}$.Note that $j_{kk}=0$ for each $k$.

\end{proof}

We can view a choice of $j_1,...,j_{k-1}$ as a path starting and terminating at $i$, i.e. a closed path fixed at $i$. Then 
\begin{equation}
\sum_{m \ge 1} \frac{tr_{\mathcal{N}(\Gamma)} f^m}{m} = \sum_{m \ge 1} \frac{det_{\mathcal{N}(\Gamma)} W(C)}{l(C)} =-\log \zeta(W,X)
\end{equation}
It follows that
\begin{equation}
	\log \zeta(W,X)=\log det_{\mathcal{N}(\Gamma)} (I- f)^{-1}
\end{equation}
so
\begin{equation}
	\zeta(W,X)= det_{\mathcal{N}(\Gamma)} (I- f)^{-1}
\end{equation}

In general	$A_{\rho_{\Gamma} \otimes \alpha}^j$ is not self-adjoint, hence we cannot apply the above lemma directly to the $L^2$ Alexander torsion. The zeta function expression must include entries of $({A_{\rho_{\Gamma} \otimes \alpha}^j})^* $, not only $A_{\rho_{\Gamma} \otimes \alpha}^j$. It seems difficult to utilize such an expression of $L^2$ torsion to find relations to the colored Jones polynomials.

\bibliography{zz}
\nocite{*}	
\end{document}